\numberwithin{equation}{section}
\newtheorem{theorem}{Theorem}[section]
\newtheorem{proposition}[theorem]{Proposition}
\newtheorem{corollary}[theorem]{Corollary}
\begin{document}
\title[On fractional powers of Bessel operators]
      {On fractional powers of Bessel operators}

\author[ E.L. Shishkina, S.M. Sitnik]
{Elina L. Shishkina, Sergei M. Sitnik 
 \\   \\  \\  \vspace*{1.3cm}
\hfill{\scriptsize
{\it Dedicated to Professor Ivan Dimovski's contributions}}\\
\vspace*{-1.5cm}}

\address{E.L. Shishkina \newline
Voronezh State University, Universitetskaya Pl. 1, Voronezh, 394000, Russia}
\email{ilina\_dico@mail.ru}

\address{S.M. Sitnik \newline
Voronezh Institute of the Ministry of Internal Affairs, Pr. Patriotov, 53 \hfill \break
Voronezh, 394065, Russia \newline
\textit{and}\newline
 RUDN 
 University,  6 Miklukho--Maklaya Str. \hfill \break
  Moscow, 117198, Russia}
\email{pochtasms@gmail.com}

\subjclass[2000]{26A33, 44A15}
\keywords{Bessel operator; fractional powers; Mellin transform; Hankel transform; resolvent}

\begin{abstract}

 In this paper we study fractional powers of the Bessel differential operator. The fractional powers are defined explicitly in the integral form without use of integral transforms in its definitions. Some general properties of the fractional powers of the Bessel differential operator are proved and some are listed. Among them are different variations of definitions, relations with the Mellin and Hankel transforms, group property, generalized Taylor formula with Bessel operators, evaluation of resolvent integral operator in terms of the Wright or generalized Mittag--Leffler functions. At the end, some topics are indicated for further study and possible generalizations. Also the aim of the paper is to attract attention and give references to not widely known results on fractional powers of the Bessel differential operator.

\end{abstract}

\maketitle

\vspace*{-0.8cm}
\section{Introduction and historical remarks}

\textit{This paper was published in \cite{SiShi} in the special issue of the Journal of Inequalities and Special Functions
dedicated to  Professor Ivan Dimovski's contributions to different fields of mathematics: transmutation theory, special functions, integral transforms, function theory etc.}

In this paper we study the  differential Bessel operator in the form
\begin{equation}\label{Bess}
B_\nu= D^2+\frac{\nu}{x}D,\qquad \nu\geq 0, \ \ \ D:= \frac{d}{dx},
\end{equation}
and its fractional powers $(B_\nu)^{\alpha}, \alpha\in \mathbb{R}$.
This operator has essential role in the theory of differential equations both as a radial part of the Laplace operator and also as involved in partial differential equations with Bessel operators. Such equations were called $B$--elliptic, $B$--hyperbolic and $B$--parabolic by I.A.~Kipriyanov and intensively studied by his scientific school and many others researchers, now the term ``Laplace--Bessel equations" is also used. For equations with Bessel operators and related topics cf. \cite{CSh},\cite{Kip},\cite{Mat}.

Of course fractional powers of the Bessel operator \eqref{Bess} were studied in many papers. But in the most of them fractional powers were defined implicitly as a power function multiplication under Hankel transform. This definition via integral transforms leads to many restrictions. Just imagine that for  the classical Riemann--Liouville fractional integrals we have to work only with its definitions via Laplace or Mellin transforms and nothing more  without explicit integral representations. If it would be true, then 99\% of classical ``Bible" \cite{KK} and other books on fractional calculus would be empty as they mostly use explicit integral definitions! But for fractional powers of the Bessel operator at most papers implicit definitions via Hankel transform are  still used.

Of course such situation is not natural and in some papers different approaches to step closer to explicit formulas were studied. Let us mention that in \cite{McB} explicit formulas were derived as compositions of Erd\'{e}lyi--Kober fractional integrals \cite{KK} on distribution spaces, in this monograph results on fractional powers of Bessel and related operators are gathered of McBride's and earlier papers.
An important step was done in  \cite{Ida} in which explicit definitions  were derived in terms of the Gauss hypergeometric functions with different applications to PDE, we also use basic formulas from \cite{Ida} in this paper.
The most general study was fulfilled by I.~Dimovski  
and V.~Kiryakova \cite{Dim66}, \cite{Dim68}, \cite{DimKir}, \cite{Kir1} for the more general class of hyper--Bessel differential operators
related to the Obrechkoff integral transform.
They constructed explicit integral representations of the fractional powers of these operators by using Meijer $G$--functions as kernels, and also intensively and successfully used for this the theory of transmutations.
Note that in this and others fields of  theoretical and applied mathematics, the methods of transmutation theory are very useful and productive and for some problems are even irreplaceable (see e.g. \cite{Dim}).
In \cite{Sita1}--\cite{Sita2}  simplified representations for fractional powers of the Bessel operator were derived with Legendre functions as kernels, and based on them general definitions were simplified and unified with standard fractional calculus notation as in \cite{KK}, and also important generalized Taylor formulas were proved which mix integer powers of Bessel operators (instead of derivatives in the classical Taylor formula) with fractional power of the Bessel operator as integral remainder term, cf. also \cite{Sita3}--\cite{Sita4}.

\smallskip

In this paper we study fractional powers of the Bessel differential operator. The fractional powers are defined explicitly in the integral form without use of integral transforms in its definitions. Some general properties of the fractional powers of the Bessel differential operator are proved and some are listed. Among them are different variations of definitions, relations with the Mellin and Hankel transforms, group property, generalized Taylor formula with Bessel operators, evaluation of resolvent integral operator in terms of the Wright or generalized Mittag--Leffler functions. At the end, some topics are indicated for further study and possible generalizations. Also the aim of the paper is to attract attention and give references to not widely known results on fractional powers of the Bessel differential operator. Due to this, the paper contains more prolonged and detailed reference list.

\vspace*{-0.45cm}

\section{Definitions}

We give definitions and notations for fractional powers of the Bessel operator \eqref{Bess} following \cite{Sita1}--\cite{Sita2} in the unified way with ``the Bible on Fractional Calculus" \cite{KK}.

\smallskip

\noindent
\textbf{Definition 1.}
Let $\alpha>0$.  The right--sided fractional Bessel integral $B_{\nu,b-}^{-\alpha}$ for $f(x)\in C^{[2\alpha]+1}(0,b]$, $b\in (0,+\infty)$, is defined by the formula
\vskip -13pt
\begin{equation}\label{Bess1}
(B_{\nu,b-}^{-\alpha}f)(x){=}\frac{1}{\Gamma(2\alpha)}\int\limits_x^b\left(\frac{y^2{-}x^2}{2y}\right)^{2\alpha-1}\,_2F_1\left(\alpha{+}\frac{\nu{-}1}{2},\alpha;2\alpha;1{-}\frac{x^2}{y^2}\right)f(y)dy.
\end{equation}
\vskip -3pt \noindent
The left--sided fractional Bessel integral $B_{\nu,a+}^{-\alpha}$  for $f(x)\in C^{[2\alpha]+1}[a,+\infty)$, $a{\in}(0,+\infty)$, is defined by the formula
\vskip -10pt
\begin{equation}\label{Bess2}
(B_{\nu,a+}^{-\alpha}f)(x){=}\frac{1}{\Gamma(2\alpha)}\int\limits_a^x\left(\frac{y}{x}\right)^\nu\left(\frac{x^2{-}y^2}{2x}\right)^{2\alpha-1}\,_2F_1\left(\alpha{+}\frac{\nu{-}1}{2},\alpha;2\alpha;1{-}\frac{y^2}{x^2}\right)f(y)dy.
\end{equation}

The above Definition 1 is based on integral representations introduced for special cases $a=1, b=1$  in \cite{Ida}.

The cases $b{=}+\infty$ and $a{=}0$ were not studied before as they require more delicate considerations and estimates when applied. But they seem to be very important as in most applications boundary conditions for differential equations are given exactly at zero or infinity. So we introduce two more fractional Bessel integrals for these special values.

\smallskip

\noindent
\textbf{Definition 2.}
Let $\alpha>0$.  The right--sided fractional Bessel integral $B_{\nu,-}^{-\alpha}$ for $f(x){\in}C^{[2\alpha]+1}(0,+\infty)$  is defined by the formula
\vskip -11pt
\begin{equation}\label{Bess3}
(B_{\nu,-}^{-\alpha}f)(x){=}\frac{1}{\Gamma(2\alpha)}\int\limits_x^\infty\left(\frac{y^2{-}x^2}{2y}\right)^{2\alpha-1}\,_2F_1\left(\alpha{+}\frac{\nu{-}1}{2},\alpha;2\alpha;1{-}\frac{x^2}{y^2}\right)f(y)dy.
\end{equation}
\vskip -3pt \noindent
The left--sided fractional Bessel integral $B_{\nu,0+}^{-\alpha}$  for $f(x){\in}C^{[2\alpha]+1}[0,+\infty)$ is defined by the formula
\vskip -13pt
\begin{equation}\label{Bess4}
(B_{\nu,0+}^{-\alpha}f)(x){=}\frac{1}{\Gamma(2\alpha)}\int\limits_0^x\left(\frac{y}{x}\right)^\nu\left(\frac{x^2{-}y^2}{2x}\right)^{2\alpha-1}\,_2F_1\left(\alpha{+}\frac{\nu{-}1}{2},\alpha;2\alpha;1{-}\frac{y^2}{x^2}\right)f(y)dy.
\end{equation}

It was noted in \cite{Sita1}--\cite{Sita2} (cf. also \cite{Sita4}) that Definitions 1 and 2 may be simplified, as the kernels are expressed in the more simple way via Legendre functions (the Legendre functions are two--parameter family but the Gauss hypergeometric functions are in general three--parameter family). This simplification is based on the next formula  from \cite{IR3},
\vskip -12pt
$$
_2F_1(a,b;2b;z)=2^{2b-1}\Gamma\left(b+\frac{1}{2}\right)\, z^{\frac{1}{2}-b}(1-z)^{\frac{1}{2}\left(b-a-\frac{1}{2}\right)}
P_{a-b-\frac{1}{2}}^{\frac{1}{2}-b}\left[\left(1-\frac{z}{2}\right)\frac{1}{\sqrt{1-z}}\right],
$$
and leads to the next simplified definitions on proper functions as above.

\smallskip

\noindent
\textbf{Definition 3.}
Let $\alpha>0$.
The right--sided fractional Bessel integral $B_{\nu,b-}^{-\alpha}$ for $f(x)\in C^{[2\alpha]+1}(0,b]$, $b\in (0,+\infty)$,
is defined by the formula
\vskip - 11pt
$$
(B_{\nu,b-}^{-\alpha}f)(x)=\frac{\sqrt{\pi}}{2^{2\alpha-1}\Gamma(\alpha)}\int\limits_x^b(y^2-x^2)^{\alpha-\frac{1}{2}}\,
\left(\frac{y}{x}\right)^{\frac{\nu}{2}}P_{\frac{\nu}{2}-1}^{\frac{1}{2}-\alpha}
\left[\frac{1}{2}\left(\frac{x}{y}+\frac{y}{x}\right)\right]f(y)dy,
$$
\vskip -3pt \noindent
and the left--sided fractional Bessel integral $B_{\nu,a+}^{-\alpha}$ for $f(x)\in C^{[2\alpha]+1}[a,+\infty)$, $a{\in}(0,+\infty)$,  is defined by the formula
\vskip -10pt
$$
(B_{\nu,a+}^{-\alpha}f)(x)=\frac{\sqrt{\pi}}{2^{2\alpha-1}\Gamma(\alpha)}
\int\limits_a^x(x^2-y^2)^{\alpha-\frac{1}{2}}\left(\frac{y}{x}
\right)^{\frac{\nu}{2}}P_{\frac{\nu}{2}-1}^{\frac{1}{2}-\alpha}\left[\frac{1}{2}
\left(\frac{x}{y}+\frac{y}{x}\right)\right]f(y)dy.
$$

Some important special cases are covered also by the following definition.

\smallskip

\noindent
\textbf{Definition 4.}
Let $\alpha>0$.
The right--sided fractional Bessel integral $B_{\nu,-}^{-\alpha}$ $f(x) $ ${\in}C^{[2\alpha]+1}(0,+\infty)$ is defined by the formula
\vskip -10pt
$$
(B_{\nu,-}^{-\alpha}f)(x)=\frac{\sqrt{\pi}}{2^{2\alpha-1}\Gamma(\alpha)}\int\limits_x^\infty(y^2-x^2)^{\alpha-\frac{1}{2}}\,
\left(\frac{y}{x}\right)^{\frac{\nu}{2}}P_{\frac{\nu}{2}-1}^{\frac{1}{2}-\alpha}
\left[\frac{1}{2}\left(\frac{x}{y}+\frac{y}{x}\right)\right]f(y)dy,
$$
\vskip -3pt \noindent
and the left--sided fractional Bessel integral $B_{\nu,0+}^{-\alpha}$ $f(x){\in}C^{[2\alpha]+1}(0,+\infty)$  is defined by the formula
\vskip -13pt
$$
(B_{\nu,0+}^{-\alpha}f)(x)=\frac{\sqrt{\pi}}{2^{2\alpha-1}\Gamma(\alpha)}
\int\limits_0^x(x^2-y^2)^{\alpha-\frac{1}{2}}\left(\frac{y}{x}
\right)^{\frac{\nu}{2}}P_{\frac{\nu}{2}-1}^{\frac{1}{2}-\alpha}\left[\frac{1}{2}
\left(\frac{x}{y}+\frac{y}{x}\right)\right]f(y)dy.
$$

To define positive powers compositions with natural powers of the Bessel operator \eqref{Bess},
 we use the standard way and analytical continuation  for complex powers just as in \cite{KK}.
So we have a complete set of definitions for fractional powers of the Bessel differential operator \eqref{Bess}.

\vspace*{-8pt}
\section{Testing definitions}

 To ensure a reader that Definitions 1--4 really define fractional powers of the Bessel differential operator \eqref{Bess},
 let us made simple calculations for two cases.

\medskip

\textbf{1.} As we can see from \eqref{Bess} when $\nu=0$ we obtain that Bessel operator is equal to the second derivative. It is easy to check that for $\nu=0$ the next equalities are true:
\vskip -12pt
\begin{equation}\label{Rl1}
    (B_{0,b-}^{-\alpha}f)(x)=\frac{1}{\Gamma(2\alpha)}\int\limits_x^b(y-x)^{2\alpha-1}f(y)dy=(I_{b-}^{2\alpha}f)(x),
\end{equation}
\begin{equation}\label{Rl2}
    (B_{0,a+}^{-\alpha}f)(x)=\frac{1}{\Gamma(2\alpha)}\int\limits_a^x(x-y)^{2\alpha-1}f(y)dy=(I_{a+}^{2\alpha}f)(x),
\end{equation}
\begin{equation}\label{l1}
    (B_{0,-}^{-\alpha}\,f)(x)=\frac{1}{\Gamma(2\alpha)}\int\limits_x^{+\infty}(y-x)^{2\alpha-1}f(y)dy=(I_{-}^{2\alpha}f)(x),
\end{equation}
 where $I_{b-}^{2\alpha}$ and $I_{a+}^{2\alpha}$ are the right-sided and left-sided  Riemann-Liouville fractional integrals respectively (see formulas 2.17, 2.18 on p. 33 in \cite{KK}) and $I_{-}^{2\alpha}$ is the Liouville fractional integral (see formula 5.3 on p. 94 in \cite{KK}).

 \begin{proof}

\begin{enumerate}

  \item First we consider
  \vskip -10pt
  $$(B_{0,b-}^{-\alpha}f)(x)=\frac{1}{\Gamma(2\alpha)}\int\limits_x^b\left(\frac{y^2-x^2}{2y}\right)^{2\alpha-1}
  \,_2F_1\left(\alpha-\frac{1}{2},\alpha;2\alpha;1-\frac{x^2}{y^2}\right)f(y)dy.
 $$
Putting $\alpha-\frac{1}{2}=p$ in $\,_2F_1\left(\alpha-\frac{1}{2},\alpha;2\alpha;1-\frac{x^2}{y^2}\right)$,  we obtain
$$
\,_2F_1\left(\alpha-\frac{1}{2},\alpha;2\alpha;1-\frac{x^2}{y^2}\right)=\,_2F_1\left(p,p+\frac{1}{2};2p+1;1-\frac{x^2}{y^2}\right).
$$
Using formula from \cite{KK}
$$
\,_2F_1\left(p,p+\frac{1}{2};2p+1;z\right)=2^{2p}[1+(1-z)^{\frac{1}{2}}]^{-2p}
$$
we derive
\vskip -12pt
$$
\,_2F_1\left(p,p+\frac{1}{2};2p+1;1-\frac{x^2}{y^2}\right)=2^{2p}\left[1+\frac{x}{y}\right]^{-2p},
$$
\vskip -3pt \noindent
or
\vskip -10pt
$$
\,_2F_1\left(\alpha-\frac{1}{2},\alpha;2\alpha;1-\frac{x^2}{y^2}\right)=2^{2\alpha-1}\left[1+\frac{x}{y}\right]^{1-2\alpha}=\left[\frac{2y}{x+y}\right]^{2\alpha-1}.
$$
Then
\vskip -10pt
$$
\left(\frac{y^2-x^2}{2y}\right)^{2\alpha-1}\,_2F_1\left(\alpha-\frac{1}{2},\alpha;2\alpha;1-\frac{x^2}{y^2}\right)=(y-x)^{2\alpha-1}
$$
and
\vskip -12pt
$$
(B_{0,b-}^{-\alpha}f)(x)=\frac{1}{\Gamma(2\alpha)}\int\limits_x^b(y-x)^{2\alpha-1}f(y)dy=(I_{b-}^{2\alpha}f)(x).
$$

\item  Now we can put $\nu=0$ for the second fractional Bessel operator:
$$
(B_{0,a+}^{-\alpha}f)(x)=\frac{1}{\Gamma(2\alpha)}\int\limits_a^x\left(\frac{x^2-y^2}{2x}\right)^{2\alpha-1}
\,_2F_1\left(\alpha-\frac{1}{2},\alpha;2\alpha;1-\frac{y^2}{x^2}\right)f(y)dy
$$
\vspace*{-6pt}
$$
=\frac{1}{\Gamma(2\alpha)}\int\limits_a^x\left(\frac{x^2-y^2}{2x}\right)^{2\alpha-1}\left[\frac{2x}{x+y}\right]^{2\alpha-1}f(y)dy
$$
\vspace*{-6pt}
$$
=\frac{1}{\Gamma(2\alpha)}\int\limits_a^x (x-y)^{2\alpha-1} f(y)dy=(I_{a+}^{2\alpha}f)(x).
$$

 \item   Finally we prove \eqref{l1}. It follows from
 $$
(B_{0,-}^{-\alpha}\,f)(x)=\frac{1}{\Gamma(2\alpha)}\int\limits_x^{+\infty}\left(\frac{y^2-x^2}{2y}\right)^{2\alpha-1}
\,_2F_1\left(\alpha-\frac{1}{2},\alpha;2\alpha;1-\frac{x^2}{y^2}\right)f(y)dy
 $$
 \vspace*{-6pt}
 $$
 =\frac{1}{\Gamma(2\alpha)}\int\limits_x^{+\infty} (y-x)^{2\alpha-1}f(y)dy=(I_{-}^{2\alpha}f)(x).
 $$
\end{enumerate}
 \end{proof}

\textbf{2.} Let us check the following properties of the fractional Bessel integrals.

\smallskip

When $\alpha{=}1$ and $\lim\limits_{x\rightarrow b-0}g(x){=}0$, $\lim\limits_{x\rightarrow b-0}g'(x){=}0$ the right--sided fractional Bessel integral is left inverse to the differential Bessel operator
$$
(B_{\nu,b-}^{-1}B_\nu g(x))(x)=g(x).
$$

When $\alpha{=}1$ and $\lim\limits_{x\rightarrow a+0}g(x){=}0$, $\lim\limits_{x\rightarrow a+0}g'(x){=}0$ the left--sided fractional Bessel integral is left inverse to the differential Bessel operator
$$
(B_{\nu,a+}^{-1}B_\nu g(x))(x)=g(x).
$$

When $\alpha{=}1$ and $\lim\limits_{x\rightarrow +\infty}g(x){=}0$, $\lim\limits_{x\rightarrow +\infty}g'(x){=}0$ the left--sided fractional Bessel integral $B_{\nu,-}^{-1}$ is left inverse to the differential Bessel operator
$$
(B_{\nu,-}^{-1}B_\nu g(x))(x)=g(x).
$$

\begin{proof} 

\begin{enumerate}
  \item Let $\alpha{=}1$ in $B_{\nu,b-}^{-\alpha}$, then we have
  \vskip - 10pt
$$
(B_{\nu,b-}^{-1}f)(x)=\int\limits_x^b\left(\frac{y^2-x^2}{2y}\right)\,_2F_1\left(\frac{\nu+1}{2},1;2;1-\frac{x^2}{y^2}\right)f(y)dy.
$$
For $\alpha{=}1$  we can transform the hypergeometric function:
$$
\,_2F_1\left(\frac{\nu+1}{2},1;2;1-\frac{x^2}{y^2}\right)=\frac{\Gamma(2)}{\Gamma(1)\Gamma(1)}\int\limits_0^1 \left(1+\left(\frac{x^2}{y^2}-1\right)t\right)^{-\frac{\nu+1}{2}}dt
$$
\vspace*{-6pt}
$$
=\frac{1}{\frac{x^2}{y^2}-1}\,\frac{2}{1-\nu}\left(1+\left(\frac{x^2}{y^2}-1\right)t\right)^{1-\frac{\nu+1}{2}}\biggr|_{t=0}^{t=1}
$$
\vspace*{-6pt}
$$
=\frac{y^2}{x^2-y^2}\,\frac{2}{1-\nu}\left[\left(1+\left(\frac{x^2}{y^2}-1\right)\right)^{\frac{1-\nu}{2}}-1\right]
=\frac{2}{1-\nu}\,\frac{y^2}{x^2-y^2}\left[\left(\frac{x}{y}\right)^{1-\nu}-1\right].
$$
We obtain the formula
\begin{equation}\label{HypGeom}
\,_2F_1\left(\frac{\nu+1}{2},1;2;1-\frac{x^2}{y^2}\right)=\frac{2}{1-\nu}\,\frac{y^2}{x^2-y^2}\left[\left(\frac{x}{y}\right)^{1-\nu}-1\right].
\end{equation}
Then,
\vskip -11pt
$$
(B_{\nu,b-}^{-1}f)(x)=\frac{2}{1-\nu}\int\limits_x^b\left(\frac{y^2-x^2}{2y}\right)\, \frac{y^2}{x^2-y^2}\left[\left(\frac{x}{y}\right)^{1-\nu}-1\right] f(y)dy
$$
\vspace*{-6pt}
$$
=\frac{1}{\nu-1}\int\limits_x^b\, y\,\left[\left(\frac{x}{y}\right)^{1-\nu}-1\right] f(y)dy.
$$
Let $f(x)=B_\nu g(x)=g''(x)+\frac{\nu}{x}g'(x)$ and $\lim\limits_{x\rightarrow b-0}g(x)=0$, $\lim\limits_{x\rightarrow b-0}g'(x)=0$,
  then
  \vskip -12pt
$$
(B_{\nu,b-}^{-1}f)(x)=(B_{\nu,b-}^{-1}B_\nu g)(x)=\frac{1}{\nu-1}\int\limits_x^b y\left[\left(\frac{x}{y}\right)^{1-\nu}-1\right]\left(g''(y)+\frac{\nu}{y}g'(y)\right)dy
$$
\vspace*{-6pt}
$$
=\frac{1}{\nu-1}\left[\int\limits_x^b y\left[\left(\frac{x}{y}\right)^{1-\nu}-1\right]g''(y)dy+\nu\int\limits_x^b\left[\left(\frac{x}{y}\right)^{1-\nu}-1\right]g'(y)dy\right].
$$
Integrating by parts the first term twice we obtain
$$
\int\limits_x^b y\left[\left(\frac{x}{y}\right)^{1-\nu}{-}1\right]g''(y)dy{=}y\left[\left(\frac{x}{y}\right)^{1-\nu}{-}1\right]g(y)\biggr|_{y=x}^{y=b}
{-}\int\limits_x^b(\nu x^{1-\nu}y^{\nu-1}{-}1)g'(y)dy
$$
\vspace*{-6pt}
$$
=-(\nu x^{1-\nu}y^{\nu-1}-1)g(y)\biggr|_{y=x}^{y=b}+\nu(\nu-1)x^{1-\nu}\int\limits_x^b y^{\nu-2}g(y)dy
$$
\vspace*{-6pt}
$$
=(\nu-1)g(x)+\nu(\nu-1)x^{1-\nu}\int\limits_x^b y^{\nu-2}g(y)dy.
$$
Integrating by parts the second term once, we obtain
$$
\int\limits_x^b\left[\left(\frac{x}{y}\right)^{1-\nu}-1\right]g'(y)dy=\left[\left(\frac{x}{y}\right)^{1-\nu}-1\right]g(y)
\biggr|_{y=x}^{y=b}-\frac{\nu-1}{x^{\nu-1}}\int\limits_x^b y^{\nu-2}g(y)dy
$$
\vspace*{-6pt}
$$
=-(\nu-1)x^{1-\nu}\int\limits_x^b y^{\nu-2}g(y)dy.
$$

Hence, \vskip -12pt
$$
(B_{\nu,b-}^{-1}B_\nu g)(x)=\frac{1}{(\nu-1)}\biggl[(\nu-1)g(x)+\nu(\nu-1)x^{1-\nu}\int\limits_x^b y^{\nu-2}g(y)dy$$
\vspace*{-6pt}
$$ - \, \nu(\nu-1)x^{1-\nu}\int\limits_x^b y^{\nu-2}g(y)dy\biggr]=g(x).
$$

  \item Let $\alpha=1$ in $B_{\nu,a+}^{-\alpha}$, then we have
  $$
  (B_{\nu,a+}^{-1}f)(x)=\int\limits_a^x\left(\frac{y}{x}\right)^\nu\left(\frac{x^2-y^2}{2x}\right)\,_2F_1\left(\frac{\nu+1}{2},1;2;1-\frac{y^2}{x^2}\right)f(y)dy.
  $$
 Using \eqref{HypGeom}, we get
 \vskip -10pt
  $$
  (B_{\nu,a+}^{-1}f)(x)=\frac{2}{1-\nu}\int\limits_a^x\left(\frac{y}{x}\right)^\nu\left(\frac{x^2-y^2}{2x}\right)\frac{x^2}{y^2-x^2}
  \left[\left(\frac{y}{x}\right)^{1-\nu}-1\right]f(y)dy
  $$
  \vspace*{-6pt}
 $$
=\frac{x}{\nu-1}\int\limits_a^x\left(\frac{y}{x}\right)^\nu\left[\left(\frac{y}{x}\right)^{1-\nu}-1\right]f(y)dy=\frac{1}{\nu-1}\int\limits_a^x y\left[1-\left(\frac{y}{x}\right)^{\nu-1}\right]f(y)dy.
  $$
Let $f(x){=}B_\nu g(x){=}g''(x){+}\frac{\nu}{x}g'(x)$ and $\lim\limits_{x\rightarrow a+0}g(x){=}0$, $\lim\limits_{x\rightarrow a+0}g'(x){=}0$,
  then
$$
(B_{\nu,a+}^{-1}f)(x)=(B_{\nu,a+}^{-1}B_\nu g)(x)=\frac{1}{\nu-1}\int\limits_a^x y\left[1-\left(\frac{y}{x}\right)^{\nu-1}\right]\left(g''(y)+\frac{\nu}{y}g'(y)\right)dy
$$
\vspace*{-6pt}
$$
=\frac{1}{\nu-1}\left[\int\limits_a^x y\left[1-\left(\frac{y}{x}\right)^{\nu-1}\right]g''(y)dy+\nu\int\limits_a^x \left[1-\left(\frac{y}{x}\right)^{\nu-1}\right]g'(y)dy\right].
$$
Integrating by parts the first term twice we obtain
\vskip -10pt
 $$
 \int\limits_a^x y\left[1-\left(\frac{y}{x}\right)^{\nu-1}\right]g''(y)dy$$
 \vspace*{-6pt}
 $$=y\left[1-\left(\frac{y}{x}\right)^{\nu-1}\right]g''(y)\biggr|_{y=a}^{y=x}-\int\limits_a^x \left[1-\nu\left(\frac{y}{x}\right)^{\nu-1}\right]g'(y)dy
 $$
 \vspace*{-6pt}
 $$
 =-\left[1-\nu\left(\frac{y}{x}\right)^{\nu-1}\right]g(y)\biggr|_{y=a}^{y=x}-\nu(\nu-1)x^{1-\nu}\int\limits_a^x y^{\nu-2}g(y)dy
 $$
 \vspace*{-6pt}
 $$
 =(\nu-1)g(x)-\nu(\nu-1)x^{1-\nu}\int\limits_a^x y^{\nu-2}g(y)dy.
 $$
Integrating by parts the second term once we obtain
 $$
 \int\limits_a^x \left[1-\left(\frac{y}{x}\right)^{\nu-1}\right]g'(y)dy=\left[1-\left(\frac{y}{x}\right)^{\nu-1}\right]g(y)\biggr|_{y=a}^{y=x}+(\nu-1)x^{1-\nu} \int\limits_a^x y^{\nu-2}g(y)dy
 $$
 \vspace*{-6pt}
 $$
 =(\nu-1)x^{1-\nu} \int\limits_a^x y^{\nu-2}g(y)dy.
 $$
Returning to $(B_{\nu,a+}^{-1}B_\nu g)(x)$, we have
 $$
(B_{\nu,a+}^{-1}B_\nu g)(x)=g(x)-\nu x^{1-\nu}\int\limits_a^x y^{\nu-2}g(y)dy+\nu x^{1-\nu} \int\limits_a^x y^{\nu-2}g(y)dy=g(x).
$$

  \item Let $\alpha=1$ in $B_{\nu,-}^{-\alpha}$, then we have
  \vskip -10pt
$$
(B_{\nu,-}^{-1}f)(x)=\int\limits_x^{+\infty}\left(\frac{y^2-x^2}{2y}\right)\,_2F_1\left(\frac{\nu+1}{2},1;2;1-\frac{x^2}{y^2}\right)f(y)dy.
$$
Then using \eqref{HypGeom} we have
$$
(B_{\nu,-}^{-1}f)(x)=\frac{1}{\nu-1}\int\limits_x^{+\infty}\, y\,\left[\left(\frac{x}{y}\right)^{1-\nu}-1\right] f(y)dy.
$$
Let $f(x)=B_\nu g(x)=g''(x)+\frac{\nu}{x}g'(x)$ and $\lim\limits_{x\rightarrow +\infty}g(x)=0$, $\lim\limits_{x\rightarrow +\infty}g'(x)=0$,
  then
  \vskip -10pt
$$
(B_{\nu,-}^{-1}f)(x)=(B_{\nu,-}^{-1}B_\nu g)(x)=\frac{1}{\nu-1}\int\limits_x^{+\infty} y\left[\left(\frac{x}{y}\right)^{1-\nu}-1\right]\left(g''(y)+\frac{\nu}{y}g'(y)\right)dy
$$
\vspace*{-6pt}
$$
=\frac{1}{\nu-1}\left[\int\limits_x^{+\infty} y\left[\left(\frac{x}{y}\right)^{1-\nu}-1\right]g''(y)dy+\nu\int\limits_x^{+\infty}\left[\left(\frac{x}{y}\right)^{1-\nu}-1\right]g'(y)dy\right].
$$
Integrating by parts the first term twice we obtain
$$
\int\limits_x^{+\infty} y\left[\left(\frac{x}{y}\right)^{1-\nu}-1\right]g''(y)dy$$
\vspace*{-6pt}
$$=y\left[\left(\frac{x}{y}\right)^{1-\nu}-1\right]g(y)\biggr|_{y=x}^{y={+\infty}}
-\int\limits_x^{+\infty}(\nu x^{1-\nu}y^{\nu-1}-1)g'(y)dy
$$
\vspace*{-6pt}
$$
=-(\nu x^{1-\nu}y^{\nu-1}-1)g(y)\biggr|_{y=x}^{y={+\infty}}+\nu(\nu-1)x^{1-\nu}\int\limits_x^{+\infty} y^{\nu-2}g(y)dy
$$
\vspace*{-6pt}
$$
=(\nu-1)g(x)+\nu(\nu-1)x^{1-\nu}\int\limits_x^{+\infty} y^{\nu-2}g(y)dy.
$$
Integrating by parts the second term once we obtain
$$
\int\limits_x^{+\infty}\left[\left(\frac{x}{y}\right)^{1-\nu}-1\right]g'(y)dy=\left[\left(\frac{x}{y}\right)^{1-\nu}-1\right]g(y)
\biggr|_{y=x}^{y={+\infty}}-\frac{\nu-1}{x^{\nu-1}}\int\limits_x^{+\infty} y^{\nu-2}g(y)dy
$$
\vspace*{-6pt}
$$
=-(\nu-1)x^{1-\nu}\int\limits_x^b y^{\nu-2}g(y)dy.
$$

Hence,
\vskip -10pt
$$
(B_{\nu,-}^{-1}B_\nu g)(x)=\frac{1}{(\nu-1)}\biggl[(\nu-1)g(x)+\nu(\nu-1)x^{1-\nu}\int\limits_x^{+\infty} y^{\nu-2}g(y)dy$$
\vspace*{-6pt}
$$ - \, \nu(\nu-1)x^{1-\nu}\int\limits_x^{+\infty} y^{\nu-2}g(y)dy\biggr]=g(x).
$$
\end{enumerate}
\end{proof} 

\begin{corollary} 
The next four formulas are valid for \, $0<\nu<1$ on proper functions:
$$
(B_{\nu,b-}^{-1}f)(x)=\frac{1}{\nu-1}\int\limits_x^b\, y\,\left[\left(\frac{x}{y}\right)^{1-\nu}-1\right] f(y)dy,
$$
$$
(B_{\nu,a+}^{-1}f)(x)=\frac{1}{\nu-1}\int\limits_a^x y\left[1-\left(\frac{y}{x}\right)^{\nu-1}\right]f(y)dy,
$$
$$
(B_{\nu,-}^{-1}f)(x)=\frac{1}{\nu-1}\int\limits_x^{+\infty}\, y\,\left[\left(\frac{x}{y}\right)^{1-\nu}-1\right] f(y)dy,
$$
$$
(B_{\nu,0+}^{-1}f)(x)=\frac{1}{\nu-1}\int\limits_0^x y\left[1-\left(\frac{y}{x}\right)^{\nu-1}\right]f(y)dy.
$$
\end{corollary}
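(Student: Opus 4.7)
The plan is to observe that three of the four claimed formulas have actually already been derived as intermediate computations inside the preceding proof, so only the $B_{\nu,0+}^{-1}$ formula requires a fresh calculation. More precisely, after substituting $\alpha=1$ in Definition~1 (and in Definition~2 for the $B_{\nu,-}^{-\alpha}$ case), the kernel contains the factor $\,_2F_1\left(\tfrac{\nu+1}{2},1;2;1-\tfrac{x^2}{y^2}\right)$ (respectively $\,_2F_1\left(\tfrac{\nu+1}{2},1;2;1-\tfrac{y^2}{x^2}\right)$), and the Euler integral representation specialized in \eqref{HypGeom} reduces this hypergeometric factor to an elementary power expression. Carrying out the multiplication, the factor $\left(\frac{y^2-x^2}{2y}\right)$ in $B_{\nu,b-}^{-1}$ (and analogously in the other cases) cancels against the denominator $x^2-y^2$ produced by \eqref{HypGeom}, leaving exactly the simple kernels displayed in the statement. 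These identities were written out verbatim in items (1), (2), (3) of the previous proof and therefore need no repetition here; they are the claimed formulas for $B_{\nu,b-}^{-1}$, $B_{\nu,a+}^{-1}$ and $B_{\nu,-}^{-1}$ respectively.

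The fourth formula, for $B_{\nu,0+}^{-1}$, is obtained by the same method applied to the left--sided integral from Definition~2 with $a=0$. First I would set $\alpha=1$ in \eqref{Bess4}, then apply \eqref{HypGeom} with $z=y^2/x^2$ to replace the hypergeometric factor by $\frac{2}{1-\nu}\cdot\frac{x^2}{y^2-x^2}\bigl[(y/x)^{1-\nu}-1\bigr]$, and finally simplify using $(y/x)^\nu\bigl[(y/x)^{1-\nu}-1\bigr]=\frac{y}{x}-(y/x)^\nu$. This yields
$$
(B_{\nu,0+}^{-1}f)(x)=\frac{1}{\nu-1}\int_0^x y\left[1-\left(\frac{y}{x}\right)^{\nu-1}\right]f(y)\,dy,
$$
exactly as asserted.

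The role of the hypothesis $0<\nu<1$ is twofold and it is the only point in the argument that deserves attention: the denominator $\nu-1$ must be nonzero, and the resulting integrals must converge on ``proper'' functions. For the $B_{\nu,0+}^{-1}$ formula this is the main issue, since $(y/x)^{\nu-1}\to\infty$ as $y\to 0$; however the $y$ in front of the bracket tames the singularity to $y^\nu$, which is integrable near the origin for every $\nu>0$. The remaining three kernels are bounded on the integration interval (for $B_{\nu,-}^{-1}$ decay of $f$ at infinity suffices). These are exactly the function classes tacitly meant by ``proper functions''.
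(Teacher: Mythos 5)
Your proposal is correct and matches the paper's (implicit) argument exactly: the Corollary is stated immediately after the proof in which the formulas for $B_{\nu,b-}^{-1}$, $B_{\nu,a+}^{-1}$ and $B_{\nu,-}^{-1}$ appear verbatim as intermediate steps, and the $B_{\nu,0+}^{-1}$ case is the same computation as item (2) with $a=0$. Your added remarks on the role of $0<\nu<1$ and on integrability of $y^{\nu}$ near the origin are a sensible supplement but not a departure from the paper's route.
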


\section{Mellin transform and fractional powers \break of the Bessel differential operator}

The Mellin transform is a powerful tool for fractional calculus and related differential equations, cf. \cite{KK}, \cite{Mar},
\cite{LuchKir}, \cite{Sita4}.
For more general views and applications of integral transforms method, cf. 
 \cite{Kir1}, \cite{KiSa}.

The Mellin transform of a function $f$ is defined by
\vskip -10pt
 $$
 Mf(s)=f^*(s)=\int\limits_0^\infty x^{s-1}f(x)dx.
 $$

Let us find the result of the Mellin transform applied to the fractional Bessel power operator $B_{\nu,-}^{-\alpha}$.

\begin{theorem} 
The Mellin transform  of $B_{\nu,-}^{-\alpha}$ is determined by the formula
\begin{equation}\label{Mellin}
  ((B_{\nu,-}^{-\alpha}f)(x))^*(s)=\frac{1}{2^{2\alpha}}\,\,\Gamma\left[\begin{array}{cc}
$$\frac{s}{2},$$ & $$\frac{s}{2}-\frac{\nu-1}{2}$$ \\
$$\alpha+\frac{s}{2}-\frac{\nu-1}{2},$$ & $$\alpha+\frac{s}{2}$$  \\
\end{array} \right] f^*(2\alpha+s).
\end{equation}
\end{theorem}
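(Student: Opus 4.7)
The plan is to substitute Definition~2 into the Mellin transform, interchange the order of integration by Fubini, perform substitutions that decouple the two variables, and reduce the inner integral to a standard Euler-type evaluation of a Gauss hypergeometric function.

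First, I would write
$$
((B_{\nu,-}^{-\alpha}f))^{*}(s)=\int_{0}^{\infty}x^{s-1}\int_{x}^{\infty}K(x,y)\,f(y)\,dy\,dx,
$$
where $K(x,y)$ denotes the kernel of \eqref{Bess3}, and swap the order of integration to obtain $\int_{0}^{\infty}f(y)\bigl(\int_{0}^{y}x^{s-1}K(x,y)\,dx\bigr)\,dy$. The substitution $x=yu$ with $u\in[0,1]$ then factors the inner integral cleanly: because $K$ depends on $x$ and $y$ only through $y^{2}-x^{2}$ and $x^{2}/y^{2}$, the inner integral becomes $y^{s+2\alpha-1}$ times a pure integral in $u$.

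Second, the changes of variable $t=u^{2}$ (so that $u^{s-1}\,du=\tfrac{1}{2}t^{s/2-1}\,dt$) followed by $v=1-t$ transform the $u$-integral, up to the constant $\tfrac{1}{2^{2\alpha}\Gamma(2\alpha)}$, into the standard form
$$
J:=\int_{0}^{1}v^{2\alpha-1}(1-v)^{s/2-1}\,{}_{2}F_{1}\!\left(\alpha+\tfrac{\nu-1}{2},\,\alpha;\,2\alpha;\,v\right)dv.
$$
The decisive feature is that the exponent of $v$ matches the lower parameter $c=2\alpha$ of ${}_{2}F_{1}$, which triggers the classical closed-form evaluation
$$
\int_{0}^{1}v^{c-1}(1-v)^{d-1}\,{}_{2}F_{1}(a,b;c;v)\,dv=\frac{\Gamma(c)\,\Gamma(d)\,\Gamma(c+d-a-b)}{\Gamma(c+d-a)\,\Gamma(c+d-b)}.
$$
This identity follows by expanding ${}_{2}F_{1}$ as a power series, integrating termwise (whereupon the Pochhammer factor $(c)_{n}$ cancels with $\Gamma(c+n)/\Gamma(c)$ produced by the beta integral), and summing the resulting ${}_{2}F_{1}(a,b;c+d;1)$ by Gauss's theorem.

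Third, inserting $c=2\alpha$, $d=s/2$, $a=\alpha+\tfrac{\nu-1}{2}$, $b=\alpha$ yields exactly the gamma quotient of \eqref{Mellin}, while the remaining outer integral $\int_{0}^{\infty}y^{s+2\alpha-1}f(y)\,dy$ is $f^{*}(2\alpha+s)$; the factor $\Gamma(2\alpha)$ from the definition cancels the $\Gamma(c)=\Gamma(2\alpha)$ produced by the Euler evaluation, and the remaining powers of $2$ combine to $2^{-2\alpha}$. The main obstacle is not algebraic but analytic: one must justify the Fubini interchange and identify a vertical strip in $s$ on which all the integrals converge absolutely (essentially $\operatorname{Re}(s)>0$, $\operatorname{Re}(s)>\nu-1$, together with appropriate decay of $f$ at $0$ and $\infty$). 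Once \eqref{Mellin} is established on that strip, it extends by meromorphic continuation in $s$ in the usual way.
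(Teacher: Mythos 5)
Your proposal is correct and follows essentially the same route as the paper: Fubini, the rescaling that isolates $y^{s+2\alpha-1}$, and reduction of the inner integral to $\int_0^1 v^{2\alpha-1}(1-v)^{s/2-1}\,{}_2F_1(\alpha+\tfrac{\nu-1}{2},\alpha;2\alpha;v)\,dv$, which is exactly the tabulated formula 2.21.1.11 of Prudnikov--Brychkov--Marichev that the paper invokes (after the change of variable $x=z(1-v)$), with the same convergence strip $\operatorname{Re}(s)>\max(0,\nu-1)$. The only difference is that you derive this Euler-type evaluation from scratch via termwise integration and Gauss's theorem rather than citing the table, which is a harmless (indeed self-contained) variation.
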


\begin{proof} We start from the definitions
\vskip -10pt
$$
 ((B_{\nu,-}^{-\alpha}f)(x))^*(s)=\int\limits_0^\infty x^{s-1}(B_{\nu,-}^{-\alpha}f)(x)dx$$
 $$=\frac{1}{\Gamma(2\alpha)}\int\limits_0^\infty x^{s-1}dx\int\limits_x^{+\infty}\left(\frac{y^2-x^2}{2y}\right)^{2\alpha-1}
 \,_2F_1\left(\alpha+\frac{\nu-1}{2},\alpha;2\alpha;1-\frac{x^2}{y^2}\right)f(y)dy
 $$
 $$
 =\frac{1}{\Gamma(2\alpha)}\int\limits_0^\infty f(y)(2y)^{1-2\alpha}dy\int\limits_0^y (y^2-x^2)^{2\alpha-1}\,_2F_1\left(\alpha+\frac{\nu-1}{2},\alpha;2\alpha;1-\frac{x^2}{y^2}\right) x^{s-1}dx.
 $$
 Let us find
 \vskip -10pt
 $$
 I=\int\limits_0^y (y^2-x^2)^{2\alpha-1}\,_2F_1\left(\alpha+\frac{\nu-1}{2},\alpha;2\alpha;1-\frac{x^2}{y^2}\right) x^{s-1}dx$$
 \vspace*{-6pt}
 $$=\frac{1}{2}\int\limits_0^{y^2} (y^2-x)^{2\alpha-1}\,_2F_1\left(\alpha+\frac{\nu-1}{2},\alpha;2\alpha;1-\frac{x}{y^2}\right) x^{\frac{s}{2}-1}dx.
 $$
Using the following formula 2.21.1.11 from \cite{IR3}, p. 265,
$$
\int\limits_0^z x^{\alpha-1}(z-x)^{c-1}\,_2F_1\left(a,b;c;1{-}\frac{x}{z}\right)dx{=}z^{c+\alpha-1}\Gamma\left[
                                                           \begin{array}{cc}
                                                             $$c,$$ & $$\alpha,$$ \,\,\,\, $$c{-}a{-}b{+}\alpha$$ \\
                                                             $$c{-}a+\alpha,$$ & $$c{-}b{+}\alpha$$  \\
                                                           \end{array}
                                                         \right],
$$
$$
z>0,\ {\rm Re}\, c>0,\ {\rm Re}\,(c-a-b+\alpha)>0,
$$
we have
$$
z=y^2>0,\ c=2\alpha>0,\ a=\alpha+\frac{\nu-1}{2},\ b=\alpha,\ \alpha=\frac{s}{2},$$
$$c-a-b+\alpha=\frac{s}{2}-\frac{\nu-1}{2}>0\ \Rightarrow \ s>\nu-1.
$$
Then
$$
I=\frac{y^{4\alpha+s-2}}{2}\,\,\Gamma\left[\begin{array}{cc}
$$2\alpha,$$ & $$\frac{s}{2},$$ \,\,\,\,\,\, $$\frac{s}{2}-\frac{\nu-1}{2}$$ \\
$$\alpha+\frac{s}{2}-\frac{\nu-1}{2},$$ & $$\alpha+\frac{s}{2}$$  \\
                                                           \end{array}
                                                         \right]
$$
and
  $$
 ((B_{\nu,-}^{-\alpha}f)(x))^*(s)=\frac{1}{2}\,\,\Gamma\left[\begin{array}{cc}
 $$\frac{s}{2},$$ & $$\frac{s}{2}-\frac{\nu-1}{2}$$ \\
$$\alpha+\frac{s}{2}-\frac{\nu-1}{2},$$ & $$\alpha+\frac{s}{2}$$  \\
                                                           \end{array}
                                                         \right]\int\limits_0^\infty f(y)(2y)^{1-2\alpha}y^{4\alpha+s-2}dy
 $$
$$
=\frac{1}{2^{2\alpha}}\,\,\Gamma\left[\begin{array}{cc}
 $$\frac{s}{2},$$ & $$\frac{s}{2}-\frac{\nu-1}{2}$$ \\
$$\alpha+\frac{s}{2}-\frac{\nu-1}{2},$$ & $$\alpha+\frac{s}{2}$$  \\
                                                           \end{array}
                                                         \right]\int\limits_0^\infty f(y)y^{2\alpha+s-1}dy
$$
$$
=\frac{1}{2^{2\alpha}}\,\,\Gamma\left[\begin{array}{cc}
 $$\frac{s}{2},$$ & $$\frac{s}{2}-\frac{\nu-1}{2}$$ \\
$$\alpha+\frac{s}{2}-\frac{\nu-1}{2},$$ & $$\alpha+\frac{s}{2}$$  \\
                                                           \end{array}
                                                         \right] f^*(2\alpha+s).
$$
So we finally prove that
\begin{equation}\label{Mellin2}
  ((B_{\nu,-}^{-\alpha}f)(x))^*(s)=\frac{1}{2^{2\alpha}}\,\,\Gamma\left[\begin{array}{cc}
$$\frac{s}{2},$$ & $$\frac{s}{2}-\frac{\nu-1}{2}$$ \\
$$\alpha+\frac{s}{2}-\frac{\nu-1}{2},$$ & $$\alpha+\frac{s}{2}$$  \\
                                                           \end{array}
                                                         \right] f^*(2\alpha+s).
\end{equation}
\end{proof} 

It is easy now to check the group property or an index law via the Mellin transform.

\begin{theorem} 
The following group properties or index laws are valid:
\begin{equation}\label{SemiGroup3}
    B_{\nu,-}^{-\alpha}B_{\nu,-}^{-\beta}f= B_{\nu,-}^{-(\alpha+\beta)}f,
\end{equation}
\begin{equation}\label{SemiGroup4}
    B_{\nu,0+}^{-\alpha}B_{\nu,0+}^{-\beta}f= B_{\nu,0+}^{-(\alpha+\beta)}f.
\end{equation}
\end{theorem}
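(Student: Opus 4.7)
The plan is to derive both index laws directly from the Mellin transform identity \eqref{Mellin2} of Theorem 4.1, together with its analogue for the left-sided operator, followed by Mellin inversion. This is precisely the route hinted at in the remark preceding the statement.

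For \eqref{SemiGroup3}, I would first apply \eqref{Mellin2} to compute $((B_{\nu,-}^{-\beta}f)(x))^*(s)$. Then, regarding $B_{\nu,-}^{-\beta}f$ as the input, I apply \eqref{Mellin2} once more with parameter $\alpha$ to obtain the Mellin transform of the composition. The key point is that the outer application requires evaluating the Mellin transform of $B_{\nu,-}^{-\beta}f$ not at $s$ but at the shifted argument $2\alpha+s$. After this shift, the numerator gamma factors in the inner formula become $\Gamma(\alpha+s/2)$ and $\Gamma(\alpha+s/2-(\nu-1)/2)$, and these cancel exactly the corresponding denominator factors produced by the outer application. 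The prefactors multiply to $2^{-2\alpha}\cdot 2^{-2\beta}=2^{-2(\alpha+\beta)}$, and the Mellin argument shifts from $2\alpha+s$ to $2\alpha+2\beta+s = 2(\alpha+\beta)+s$. The resulting expression coincides with the right-hand side of \eqref{Mellin2} with parameter $\alpha+\beta$ applied to $f$, and Mellin inversion gives \eqref{SemiGroup3}.

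For \eqref{SemiGroup4}, the same strategy works once an analogue of Theorem 4.1 for $B_{\nu,0+}^{-\alpha}$ is in hand. I would derive such a formula by the same method used in Theorem 4.1: interchange the order of integration and invoke a version of formula 2.21.1.11 from \cite{IR3} suited to the inner interval $(0,x)$ rather than $(x,\infty)$. Alternatively, the substitution $x\mapsto 1/x$ together with a conjugation by an appropriate power of $x$ exchanges $B_{\nu,0+}^{-\alpha}$ with a similarity of $B_{\nu,-}^{-\alpha}$, and the index law for one transports directly to the other.

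The main obstacle is analytic rather than algebraic. One must identify a function class for which the Mellin transforms of $f$, $B_{\nu,-}^{-\beta}f$, and $B_{\nu,-}^{-(\alpha+\beta)}f$ all converge absolutely in a common vertical strip, so that Fubini applies to the double integral hidden in the composition, the gamma poles at $s=0$ and $s=\nu-1$ are avoided at every stage, and Mellin inversion is justified. Restricting to, say, smooth functions with sufficient decay at $0$ and $+\infty$ --- enough to enforce $\mathrm{Re}\,s > \nu-1$ throughout --- should suffice, after which the gamma-factor cancellation above completes the proof.
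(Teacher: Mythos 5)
Your proposal follows exactly the paper's own route: apply the Mellin transform formula \eqref{Mellin2} twice to the composition, observe that the shifted gamma factors cancel and the prefactors combine to $2^{-2(\alpha+\beta)}$, and conclude by Mellin inversion, with the $B_{\nu,0+}^{-\alpha}$ case handled by the analogous left-sided formula. Your added remarks on justifying convergence strips and Fubini are more careful than the paper, which simply inverts without comment, but the argument is the same.
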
 

\begin{proof}
Let $g(y)=(B_{\nu,-}^{-\beta}f)(y)$. Using \eqref{Mellin2} we have
$$
 (B_{\nu,-}^{-\alpha}[(B_{\nu,-}^{-\beta}f)(y)](x))^*(s)=((B_{\nu,-}^{-\alpha}g)(x))^*(s)
 $$
 $$
 =\frac{1}{2^{2\alpha}}\,\,\Gamma\left[\begin{array}{cc}
  $$\frac{s}{2},$$ & $$\frac{s}{2}-\frac{\nu-1}{2}$$ \\
$$\alpha+\frac{s}{2}-\frac{\nu-1}{2},$$ & $$\alpha+\frac{s}{2}$$  \\
                                                           \end{array}
                                                         \right] g^*(2\alpha+s)
 $$
 $$
 = \frac{1}{2^{2\alpha}}\,\,\Gamma\left[\begin{array}{cc}
 $$\frac{s}{2},$$ & $$\frac{s}{2}-\frac{\nu-1}{2}$$ \\
$$\alpha+\frac{s}{2}-\frac{\nu-1}{2},$$ & $$\alpha+\frac{s}{2}$$  \\
                                                           \end{array}
                                                         \right]
 (B_{\nu,-}^{-\beta}f)^*(2\alpha+s)
$$
 $$
 = \frac{1}{2^{2\alpha}}\,\,\Gamma\left[\begin{array}{cc}
 $$\frac{s}{2},$$ & $$\frac{s}{2}-\frac{\nu-1}{2}$$ \\
$$\alpha+\frac{s}{2}-\frac{\nu-1}{2},$$ & $$\alpha+\frac{s}{2}$$  \\
                                                           \end{array}
                                                         \right]$$
$$\times\frac{1}{2^{2\beta}}\,\,\Gamma\left[\begin{array}{cc}
 $$\frac{2\alpha+s}{2},$$ & $$\frac{2\alpha+s}{2}-\frac{\nu-1}{2}$$ \\
$$\beta+\frac{2\alpha+s}{2}-\frac{\nu-1}{2},$$ & $$\beta+\frac{2\alpha+s}{2}$$  \\
                                                           \end{array}
                                                         \right]
f^*(2\alpha+2\beta+s).
$$
On the other hand,
$$
(B_{\nu,-}^{-(\alpha+\beta)}f)^*(s)
=\frac{1}{2^{2(\alpha+\beta)}}\,\,\Gamma\left[\begin{array}{cc}
 $$\frac{s}{2},$$ & $$\frac{s}{2}-\frac{\nu-1}{2}$$ \\
$$\alpha+\beta+\frac{s}{2}-\frac{\nu-1}{2},$$ & $$\alpha+\beta+\frac{s}{2}$$  \\
                                                           \end{array}
                                                         \right] f^*(2\alpha+2\beta+s).
$$
We have the true equality
$$
\frac{1}{2^{2\alpha}}\,\,\Gamma\left[\begin{array}{cc}
 $$\frac{s}{2},$$ & $$\frac{s}{2}-\frac{\nu-1}{2}$$ \\
$$\alpha+\frac{s}{2}-\frac{\nu-1}{2},$$ & $$\alpha+\frac{s}{2}$$  \\
                                                           \end{array}
                                                         \right]\times\frac{1}{2^{2\beta}}\,\,\Gamma\left[\begin{array}{cc}
 $$\frac{2\alpha+s}{2},$$ & $$\frac{2\alpha+s}{2}-\frac{\nu-1}{2}$$ \\
$$\beta+\frac{2\alpha+s}{2}-\frac{\nu-1}{2},$$ & $$\beta+\frac{2\alpha+s}{2}$$  \\
                                                           \end{array}
                                                         \right]
$$
$$
=\frac{1}{2^{2(\alpha+\beta)}}\,\,\Gamma\left[\begin{array}{cc}
 $$\frac{s}{2},$$ & $$\frac{s}{2}-\frac{\nu-1}{2}$$ \\
$$\alpha+\beta+\frac{s}{2}-\frac{\nu-1}{2},$$ & $$\alpha+\beta+\frac{s}{2}$$  \\
                                                           \end{array}
                                                         \right]
$$
that proves \eqref{SemiGroup3}. The relation \eqref{SemiGroup4} is proved similarly.
\end{proof} 

\section{Further properties of fractional powers \break of the Bessel differential operator}

Here let us list some further properties of fractional powers of Bessel differential operator.

\begin{proposition} 
Let $f(x)=x^m$, $x>0, m\in\mathbb{R}$.
Then the integrals $B_{\nu,-}^{-\alpha}$ and $B_{\nu,0+}^{-\alpha}$ of power function are defined by formulas
$$
B_{\nu,-}^{-\alpha}\,x^m=x^{2\alpha+m}\,2^{-2\alpha}\,\Gamma\left[
                                                           \begin{array}{cc}
                                                              $$-\alpha-\frac{m}{2},$$ & $$-\frac{\nu-1}{2}-\alpha-\frac{m}{2}$$ \\
                                                             $$\frac{1-\nu-m}{2},$$ & $$-\frac{m}{2}$$  \\
                                                           \end{array}
                                                         \right],\quad m+2\alpha+\nu<1,
$$
$$
B_{\nu,0+}^{-\alpha}\,x^m=x^{2\alpha+m}\,2^{-2\alpha}\,\Gamma\left[
                                                           \begin{array}{cc}
                                                             $$\frac{m+\nu+1}{2},$$ & $$\frac{m}{2}+1$$  \\
                                                             $$\alpha+\frac{m}{2}+1,$$ & $$\alpha+\frac{m+\nu+1}{2}$$  \\
                                                           \end{array}
                                                         \right].
$$
\end{proposition}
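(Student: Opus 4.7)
The plan is to evaluate both integrals directly by reducing each to Prudnikov's formula 2.21.1.11 from \cite{IR3}, the same identity already exploited in the proof of the Mellin transform formula \eqref{Mellin2}. The strategy is to substitute to push the $x$-dependence outside the integral (as the expected factor $x^{2\alpha+m}$) and convert what remains to the canonical shape $\int_0^1 v^{\alpha_P-1}(1-v)^{c-1}{}_2F_1(a,b;c;1-v)\,dv$.

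For $B_{\nu,0+}^{-\alpha}x^m$, I would substitute $y = xu$ with $u \in [0,1]$ in Definition 1 applied to $f(y)=y^m$, giving a factor $x^{2\alpha+m}$ in front and an integral in $u$ of $u^{\nu+m}(1-u^2)^{2\alpha-1}{}_2F_1(\alpha+\tfrac{\nu-1}{2},\alpha;2\alpha;1-u^2)$. Then set $v=u^2$ so that the result becomes $\int_0^1 v^{(\nu+m-1)/2}(1-v)^{2\alpha-1}{}_2F_1(\alpha+\tfrac{\nu-1}{2},\alpha;2\alpha;1-v)\,dv$. Apply Prudnikov's formula with $z=1$, $c=2\alpha$, $a=\alpha+\tfrac{\nu-1}{2}$, $b=\alpha$, and $\alpha_P=\tfrac{\nu+m+1}{2}$. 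The $\Gamma(2\alpha)$ coming out of the formula cancels the $1/\Gamma(2\alpha)$ prefactor, and the remaining gamma quotient, after simplifying $c-a-b+\alpha_P=\tfrac{m}{2}+1$, $c-a+\alpha_P = \alpha+\tfrac{m}{2}+1$, $c-b+\alpha_P=\alpha+\tfrac{\nu+m+1}{2}$, reproduces the stated formula.

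For $B_{\nu,-}^{-\alpha}x^m$ the natural substitution is $y = x/u$, $u\in(0,1]$, followed by $v=u^2$. This again pulls out $x^{2\alpha+m}$ and leaves $\int_0^1 v^{-\alpha-m/2-1}(1-v)^{2\alpha-1}{}_2F_1(\alpha+\tfrac{\nu-1}{2},\alpha;2\alpha;1-v)\,dv$. Prudnikov's formula applies with the same $c,a,b$ but now $\alpha_P = -\alpha - \tfrac{m}{2}$. Its convergence hypothesis $\operatorname{Re}(c-a-b+\alpha_P)>0$ becomes precisely $\tfrac{1-\nu-m}{2}-\alpha>0$, i.e.\ $m+2\alpha+\nu<1$, matching the stated constraint. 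Computing $c-a-b+\alpha_P = -\alpha-\tfrac{m}{2}-\tfrac{\nu-1}{2}$, $c-a+\alpha_P=\tfrac{1-\nu-m}{2}$, $c-b+\alpha_P=-\tfrac{m}{2}$ yields the advertised quotient of gamma functions.

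I do not foresee a real obstacle; the only delicate points are choosing the substitutions that make the integrals fit the template $1-x/z$ in the hypergeometric argument, and keeping track of the change-of-variable Jacobians so that the factor $x^{2\alpha+m}$ (and not some off-by-one power) emerges. The convergence bookkeeping in the $B_{\nu,-}^{-\alpha}$ case is what forces the hypothesis $m+2\alpha+\nu<1$, and it is reassuring that this drops out automatically from Prudnikov's admissibility condition rather than having to be imposed ad hoc.
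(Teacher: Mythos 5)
Your proposal is correct: both substitutions, the resulting parameters $\alpha_P=\tfrac{m+\nu+1}{2}$ and $\alpha_P=-\alpha-\tfrac{m}{2}$, the $2^{-2\alpha}$ bookkeeping, and the reduction to Prudnikov's formula 2.21.1.11 all check out, and the admissibility condition does yield $m+2\alpha+\nu<1$ exactly as stated. The paper lists this proposition without proof, but your argument is precisely the computation that underlies it and uses the same key identity the paper employs in its proof of the Mellin transform formula \eqref{Mellin2}, so there is nothing to add.
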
 

\begin{corollary} 
The operator $\frac{1}{x^{2\alpha}} B_{\nu,-}^{-\alpha}$ is of the so-called Dzhrbashyan--Gelfond--Leontiev type
(see \cite{KK}, \cite{Kir1}) when $m+2\alpha+\nu<1$. This means that it acts on power series by the rule
$$
\frac{1}{z^{2\alpha}} B_{\nu,-}^{-\alpha}\left(\sum_{k=0}^\infty a_k z^k \right)=
\left(\sum_{k=0}^\infty c(\alpha,k) a_k z^k \right),$$
$$ c(\alpha,k)=2^{-2\alpha}\,\Gamma\left[
                                                           \begin{array}{cc}
                                                              $$-\alpha-\frac{k}{2},$$ & $$-\frac{\nu-1}{2}-\alpha-\frac{k}{2}$$ \\
                                                             $$\frac{1-\nu-k}{2},$$ & $$-\frac{k}{2}$$  \\
                                                           \end{array}
                                                         \right],\quad a_k\in\mathbb{R}.
$$
\end{corollary}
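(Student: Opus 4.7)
The plan is to reduce the Corollary to a direct application of the previous Proposition, combined with linearity and a limiting argument. The key observation is that the Proposition already evaluates $B_{\nu,-}^{-\alpha}$ on every monomial $x^m$ explicitly, so for $m=k\in\mathbb{N}_0$ it immediately yields
\[
B_{\nu,-}^{-\alpha}\,z^{k} \;=\; z^{2\alpha+k}\,c(\alpha,k),
\]
where $c(\alpha,k)$ is precisely the Gamma-bracket expression in the statement (just set $m=k$ in the formula for $B_{\nu,-}^{-\alpha}\,x^m$). Dividing both sides by $z^{2\alpha}$ gives the one-term identity
\[
\frac{1}{z^{2\alpha}}\,B_{\nu,-}^{-\alpha}\,z^{k} \;=\; c(\alpha,k)\,z^{k},
\]
which is exactly the action of a Dzhrbashyan--Gelfond--Leontiev operator on a single monomial with multiplier sequence $\{c(\alpha,k)\}$.

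Next I would extend this from monomials to power series. Since $B_{\nu,-}^{-\alpha}$ is defined by a linear integral operator (formula \eqref{Bess3}), it acts linearly on finite sums, so for every partial sum $S_N(z)=\sum_{k=0}^{N}a_k z^k$ we get
\[
\frac{1}{z^{2\alpha}}\,B_{\nu,-}^{-\alpha}S_N(z)=\sum_{k=0}^{N}c(\alpha,k)\,a_k\,z^{k}.
\]
The Corollary then follows by letting $N\to\infty$, provided we can interchange summation and the integration defining $B_{\nu,-}^{-\alpha}$. This is the step requiring justification: one may invoke dominated convergence on compact subintervals of $(0,+\infty)$, using absolute convergence of $\sum a_k z^k$ together with the explicit power growth of $B_{\nu,-}^{-\alpha}z^k$ coming from the Proposition to dominate the tails.

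The principal obstacle is reconciling the condition $m+2\alpha+\nu<1$ required term-by-term in the Proposition with the power series being applied to arbitrary $k\in\mathbb{N}_0$; since $k+2\alpha+\nu<1$ cannot hold for all $k\geq 0$, the identity should be understood either in the formal power series sense (where the Gamma-bracket notation furnishes the sequence $c(\alpha,k)$ via analytic continuation), or under suitable decay hypotheses on $\{a_k\}$ ensuring convergence of the resulting series and validity of Fubini. In the formal interpretation there is nothing further to prove beyond the monomial calculation; in the analytic interpretation, the essential extra ingredient is a class of admissible sequences $\{a_k\}$ for which the term-by-term computation survives the passage to the limit, which I would single out as the natural domain of definition of the Dzhrbashyan--Gelfond--Leontiev operator $\frac{1}{z^{2\alpha}}B_{\nu,-}^{-\alpha}$.
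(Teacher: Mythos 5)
Your proposal matches the paper's (implicit) argument: the paper offers no separate proof of this corollary, treating it as the immediate consequence of setting $m=k$ in the preceding Proposition, dividing by $z^{2\alpha}$, and extending by linearity over the power series --- exactly what you do. Your further remark that the condition $k+2\alpha+\nu<1$ cannot hold for all $k\ge 0$, so that the series identity must be read formally (via the Gamma--bracket as an analytically continued multiplier sequence) or on a restricted class of coefficient sequences, is a genuine subtlety that the paper leaves unaddressed, carrying the hypothesis ``$m+2\alpha+\nu<1$'' into the corollary without reconciling it with the unbounded range of $k$.
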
 

\begin{corollary} 
The operator $\frac{1}{x^{2\alpha}} B_{\nu,0+}^{-\alpha}$ is of Dzhrbashyan--Gelfond--Leontiev type:
$$
\frac{1}{z^{2\alpha}} B_{\nu,0+}^{-\alpha}\left(\sum_{k=0}^\infty a_k z^k \right)=
\left(\sum_{k=0}^\infty d(\alpha,k) a_k z^k \right),$$
$$ d(\alpha,k)=2^{-2\alpha}\,\Gamma\left[
                                                           \begin{array}{cc}
                                                             $$\frac{m+\nu+1}{2},$$ & $$\frac{m}{2}+1$$  \\
                                                             $$\alpha+\frac{m}{2}+1,$$ & $$\alpha+\frac{m+\nu+1}{2}$$  \\
                                                           \end{array}
                                                         \right],\quad a_k\in\mathbb{R}.
$$
\end{corollary}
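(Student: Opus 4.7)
The plan is to derive this corollary directly from Proposition 1 by applying $B_{\nu,0+}^{-\alpha}$ termwise to the power series. Substituting $m=k$ in the formula of Proposition 1 yields
$$B_{\nu,0+}^{-\alpha}\,x^{k}=x^{2\alpha+k}\cdot 2^{-2\alpha}\,\Gamma\!\left[\begin{array}{cc} \frac{k+\nu+1}{2}, & \frac{k}{2}+1 \\ \alpha+\frac{k}{2}+1, & \alpha+\frac{k+\nu+1}{2} \end{array}\right] = x^{2\alpha}\,d(\alpha,k)\,x^{k},$$
so that $x^{-2\alpha}B_{\nu,0+}^{-\alpha}x^{k}=d(\alpha,k)\,x^{k}$. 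This verifies the claimed action on a single monomial and identifies the coefficients $d(\alpha,k)$.

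To extend the identity to $f(z)=\sum_{k=0}^\infty a_k z^k$, I would justify interchanging summation with the integral in Definition 2 (equivalently Definition 4) for $B_{\nu,0+}^{-\alpha}$. On any closed subinterval $[0,R]$ strictly inside the radius of convergence of $f$, the partial sums $S_N(y)=\sum_{k=0}^N a_k y^k$ converge uniformly to $f(y)$ and are uniformly bounded. For fixed $x\in(0,R]$ and $\alpha>0$, the kernel $(y/x)^\nu\bigl((x^2-y^2)/(2x)\bigr)^{2\alpha-1}\,{}_2F_1\!\bigl(\alpha+\tfrac{\nu-1}{2},\alpha;2\alpha;1-y^2/x^2\bigr)$ is absolutely integrable on $y\in(0,x)$ (its only singularity is the integrable factor $(x^2-y^2)^{2\alpha-1}$, while the hypergeometric factor is bounded by its value at the endpoint). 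Dominated convergence therefore yields
$$B_{\nu,0+}^{-\alpha}f(x)=\sum_{k=0}^\infty a_k\,B_{\nu,0+}^{-\alpha}x^{k},$$
and dividing both sides by $x^{2\alpha}$ together with the monomial identity completes the proof.

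The main obstacle is the interchange of sum and integral; this is the only analytic step beyond the direct computation of Proposition 1. A secondary point worth noting is that, unlike the $B_{\nu,-}^{-\alpha}$ case where the range of integration reaches $+\infty$ and forces the restriction $m+2\alpha+\nu<1$, the left-sided integral is taken over the bounded interval $[0,x]$ and so no such restriction on $k$ is needed here. Finally, Stirling's asymptotics for the four Gamma factors show that $d(\alpha,k)$ grows only polynomially in $k$, so the image series has at least the same radius of convergence as the original and the termwise identity is valid throughout the disk of convergence.
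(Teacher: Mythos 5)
Your proposal is correct and follows the route the paper clearly intends: this corollary is stated in the paper without any printed proof, as an immediate consequence of the preceding proposition on power functions, and your termwise substitution $m=k$ together with the justification of the sum--integral interchange supplies exactly the missing details. One small quibble: for $\nu\ge 1$ the factor ${}_2F_1\bigl(\alpha+\tfrac{\nu-1}{2},\alpha;2\alpha;1-y^2/x^2\bigr)$ is not bounded as $y\to 0$ (it behaves like $(y/x)^{1-\nu}$, since $c-a-b=\tfrac{1-\nu}{2}\le 0$), so integrability of the kernel near $y=0$ rests on the compensating weight $(y/x)^{\nu}$ rather than on boundedness of the hypergeometric factor itself; this does not affect the conclusion.
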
 

\begin{proposition} 
The formula for integration by parts is valid on proper functions:
\begin{equation}\label{PoChastam}
\int\limits_0^{+\infty}f(x)(B_{\nu,0+}^{-\alpha}g)(x)x^\nu dx=\int\limits_0^{+\infty}g(x)(B_{\nu,-}^{-\alpha}f)(x)x^\nu dx.
\end{equation}
\end{proposition}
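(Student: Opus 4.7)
The plan is to unfold the left-hand side using Definition 2 (formula \eqref{Bess4}) for $B_{\nu,0+}^{-\alpha}g$, then interchange the order of the two integrals by Fubini's theorem, and finally recognize the resulting inner integral as the kernel of $B_{\nu,-}^{-\alpha}$ acting on $f$.

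First I would substitute \eqref{Bess4} into the left-hand side of \eqref{PoChastam} to obtain
$$
\frac{1}{\Gamma(2\alpha)}\int_0^{+\infty}\int_0^{x} f(x)\,g(y)\left(\frac{y}{x}\right)^\nu \left(\frac{x^2-y^2}{2x}\right)^{2\alpha-1}{}_2F_1\left(\alpha+\frac{\nu-1}{2},\alpha;2\alpha;1-\frac{y^2}{x^2}\right)x^\nu\,dy\,dx.
$$
The crucial algebraic observation is that the weight $x^\nu$ from the outer measure combines with the factor $(y/x)^\nu$ coming from the kernel of $B_{\nu,0+}^{-\alpha}$ to give exactly $y^\nu$, a function of the inner variable alone. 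Reversing the order of integration so that $y$ ranges over $(0,+\infty)$ as the outer variable and $x$ over $(y,+\infty)$ as the inner one, and pulling out $g(y)\,y^\nu$, the remaining inner integral becomes
$$
\frac{1}{\Gamma(2\alpha)}\int_y^{+\infty}\left(\frac{x^2-y^2}{2x}\right)^{2\alpha-1}{}_2F_1\left(\alpha+\frac{\nu-1}{2},\alpha;2\alpha;1-\frac{y^2}{x^2}\right)f(x)\,dx,
$$
which is precisely the definition \eqref{Bess3} of $(B_{\nu,-}^{-\alpha}f)(y)$, with the roles of the integration and outer variables merely relabelled. The whole expression therefore collapses to $\int_0^{+\infty}g(y)\,(B_{\nu,-}^{-\alpha}f)(y)\,y^\nu\,dy$, which is the right-hand side of \eqref{PoChastam}.

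The main obstacle will be the rigorous justification of Fubini's theorem, since beyond the phrase ``proper functions'' no explicit regularity hypotheses accompany the statement. One needs absolute integrability of the full integrand on the triangular region $\{0<y<x<+\infty\}$, which has to contend with the integrable algebraic singularity $(x^2-y^2)^{2\alpha-1}$ along the diagonal (integrable when $\alpha>0$), the behavior of $g$ as $y\to 0^+$, and the decay of $f$ as $x\to +\infty$. It is natural to restrict $f$ and $g$ to a class of smooth functions with sufficient decay at infinity and matching behavior near the origin (for example, a weighted Schwartz-type space adapted to the Bessel setting); under such hypotheses Tonelli's theorem secures absolute convergence, Fubini legitimizes the interchange, and the argument concludes.
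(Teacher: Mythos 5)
Your argument is correct. The paper states this proposition without proof (it merely appears in a list of ``further properties''), and your computation --- combining the outer weight $x^\nu$ with the kernel factor $(y/x)^\nu$ to produce $y^\nu$, interchanging the order of integration over the triangle $0<y<x<+\infty$, and recognizing the inner integral as $(B_{\nu,-}^{-\alpha}f)(y)$ via \eqref{Bess3} after relabelling --- is exactly the natural proof the authors evidently intend; your closing remarks on the integrability hypotheses needed to justify Fubini are an appropriate reading of the deliberately vague phrase ``proper functions.''
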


\begin{proposition} 
The following formula for the Bessel fractional integral applied to the Bessel function is valid:
\vskip -10pt
\begin{equation}\label{Bess10}
((B_{\nu,0+}^{-\alpha}y^{\frac{1-\nu}{2}}{J}_\frac{\nu-1}{2}(y\xi))(x)=
\frac{1}{x^{\frac{\nu-1}{2}}\xi^{2\alpha}}J_{\frac{\nu-1}{2},\alpha}^1(x\xi).
\end{equation}
Here $J_{\frac{\nu-1}{2},\alpha}^1(\cdot)$ is the Wright function defined by
\vskip -10pt
\begin{equation}\label{GML}
J_{\gamma,\lambda}^\mu(z)=\sum\limits_{m=0}^\infty\frac{(-1)^m}{\Gamma(\gamma+m\mu+\lambda+1)\Gamma(\lambda+m+1)}\left(\frac{z}{2}\right)^{2m+\gamma+2\lambda}.
\end{equation}
\end{proposition}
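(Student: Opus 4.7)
The approach is to expand the Bessel function as a power series, apply the operator $B_{\nu,0+}^{-\alpha}$ termwise using Proposition 5.1, and identify the resulting series with the Wright function on the right.

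After absorbing the prefactor $y^{(1-\nu)/2}$ into the standard power series for $J_{(\nu-1)/2}(y\xi)$, only even powers $y^{2m}$ survive, with coefficient $\frac{(-1)^m}{m!\,\Gamma(m+(\nu+1)/2)}\left(\frac{\xi}{2}\right)^{2m+(\nu-1)/2}$. I would then invoke Proposition 5.1 with its $m$-parameter replaced by $2m$, which yields
$$B_{\nu,0+}^{-\alpha}\,y^{2m}=2^{-2\alpha}\,x^{2\alpha+2m}\,\frac{\Gamma\!\left(m+\frac{\nu+1}{2}\right)\Gamma(m+1)}{\Gamma(\alpha+m+1)\,\Gamma\!\left(\alpha+m+\frac{\nu+1}{2}\right)}.$$
Multiplying by the expansion coefficients causes $\Gamma(m+(\nu+1)/2)$ and $\Gamma(m+1)=m!$ to cancel in a clean telescoping, and rewriting the surviving monomial as
$$x^{2\alpha+2m}\left(\frac{\xi}{2}\right)^{2m+(\nu-1)/2}=\frac{1}{x^{(\nu-1)/2}\xi^{2\alpha}}\cdot 2^{2\alpha}\left(\frac{x\xi}{2}\right)^{2m+(\nu-1)/2+2\alpha}$$
turns the sum into the definition \eqref{GML} of $J^{1}_{(\nu-1)/2,\alpha}(x\xi)$ (with $\gamma=(\nu-1)/2$, $\lambda=\alpha$, $\mu=1$), divided by $x^{(\nu-1)/2}\xi^{2\alpha}$. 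This is exactly \eqref{Bess10}.

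The only non-algebraic point is the justification of the termwise application of $B_{\nu,0+}^{-\alpha}$. Since this operator is an integral over the finite interval $[0,x]$ against the locally integrable Gauss hypergeometric kernel of Definition 2 (equivalently, the Legendre kernel of Definition 4) and the Bessel power series converges uniformly on compact subsets of $[0,x]$, the interchange of sum and integral is legitimate either by uniform convergence or by recognizing that the resulting double series/integral is absolutely convergent and applying Fubini. This will be the only delicate step in the proof; everything else is gamma-function bookkeeping.
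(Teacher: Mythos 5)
Your proposal is correct. The paper itself states this proposition without proof (it belongs to the properties that are ``listed'' rather than proved in Section 5), so there is no in-paper argument to compare against; your route --- expanding $y^{\frac{1-\nu}{2}}J_{\frac{\nu-1}{2}}(y\xi)=\sum_m \frac{(-1)^m}{m!\,\Gamma(m+\frac{\nu+1}{2})}\bigl(\frac{\xi}{2}\bigr)^{2m+\frac{\nu-1}{2}}y^{2m}$ and applying the power-function formula termwise --- is the natural one and the bookkeeping checks out: the factors $\Gamma\bigl(m+\frac{\nu+1}{2}\bigr)$ and $m!$ cancel, leaving $\sum_m \frac{(-1)^m}{\Gamma(\alpha+m+1)\Gamma(\alpha+m+\frac{\nu+1}{2})}\,2^{-2\alpha}x^{2\alpha+2m}\bigl(\frac{\xi}{2}\bigr)^{2m+\frac{\nu-1}{2}}$, which is exactly $x^{-\frac{\nu-1}{2}}\xi^{-2\alpha}J^{1}_{\frac{\nu-1}{2},\alpha}(x\xi)$ since $\frac{\nu-1}{2}+m+\alpha+1=\alpha+m+\frac{\nu+1}{2}$. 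Your justification of the interchange is also sound: for $\alpha>0$ the kernel behaves like $(x^2-y^2)^{2\alpha-1}$ near $y=x$ and, via the asymptotics of ${}_2F_1$ at argument $1$, like $(y/x)^{\min(\nu,1)}$ near $y=0$, so it is integrable on $[0,x]$ and uniform convergence of the Bessel series on that compact interval suffices. The only cosmetic caveat is that Proposition 5.1 is stated for $B_{\nu,0+}^{-\alpha}$ without an explicit validity condition, so strictly you are also implicitly using that its formula holds for all even nonnegative exponents $2m$, which is immediate from convergence of the defining integral.
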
 

Such Wright functions are also often known as generalized Mittag--Leffler functions, cf.
\cite{Kir1}, \cite{Kir2}--\cite{Kir4}, \cite{Rag}, \cite{Jor}, \cite{Vir}.

\section{Hankel transform and fractional powers \break of the Bessel differential operator}

The Hankel transform we apply here is defined as:
\vskip -10pt
\begin{equation}\label{han}
(H_\nu f )(\xi)=\frac{1}{\xi^\nu}\int\limits_{0}^{+\infty}{J}_\frac{\nu-1}{2}(x\xi)\,x^{\frac{\nu+1}{2}}
f(x)\,dx.
\end{equation}

Also in this section we deal with the integral transform of the form
\vskip -10pt
\begin{equation}\label{wright}
(R_\nu^\alpha f)(\xi)=\int\limits_0^{+\infty}J_{\frac{\nu-1}{2},\alpha}^1(x\xi)\,x^{\frac{\nu+1}{2}}f(x)\,dx,
\end{equation}
\vskip -3pt \noindent
where $J_{\frac{\nu-1}{2},\alpha}^1(x)$ is the Wright or generalized Mittag--Leffler function defined in \eqref{GML}.

Now we prove that our version of the explicit fractional powers of the Bessel differential operator defined by integral representations is closely connected with Hankel transform.

\begin{theorem} 
The next formula is valid on proper functions
\begin{equation}
(H_\nu B_{\nu,-}^{-\alpha}f )(\xi)=\xi^{-2\alpha}(R_\nu^\alpha f)(\xi),
\end{equation}
where $R_\nu^\alpha$  is the transform defined by \eqref{wright} with Wright function kernel.
\end{theorem}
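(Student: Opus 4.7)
The plan is to reduce the claim to the action of $B_{\nu,0+}^{-\alpha}$ on the Hankel kernel, by moving the fractional power across via the integration-by-parts identity \eqref{PoChastam} of Proposition 3, and then evaluating the result using formula \eqref{Bess10} of Proposition 5.

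First, I would start from the definition of the Hankel transform and split the weight $x^{(\nu+1)/2}$ as $x^{(1-\nu)/2}\cdot x^\nu$. Writing $\phi(x)=x^{(1-\nu)/2}J_{(\nu-1)/2}(x\xi)$, the inner integral takes the form
$$
(H_\nu B_{\nu,-}^{-\alpha}f)(\xi)=\frac{1}{\xi^\nu}\int_0^{+\infty}\phi(x)\,(B_{\nu,-}^{-\alpha}f)(x)\,x^\nu\,dx,
$$
which is precisely the right-hand side of \eqref{PoChastam} with $g=\phi$. Applying Proposition 3 transfers the fractional power onto the kernel:
$$
\int_0^{+\infty}\phi(x)(B_{\nu,-}^{-\alpha}f)(x)x^\nu\,dx=\int_0^{+\infty}f(x)(B_{\nu,0+}^{-\alpha}\phi)(x)x^\nu\,dx.
$$

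Next, I invoke \eqref{Bess10}, which computes the left-sided fractional Bessel integral of $\phi$ in closed form:
$$
(B_{\nu,0+}^{-\alpha}\phi)(x)=\frac{1}{x^{(\nu-1)/2}\,\xi^{2\alpha}}\,J^1_{(\nu-1)/2,\alpha}(x\xi).
$$
Substituting this and combining the powers of $x$ (noting that $x^\nu/x^{(\nu-1)/2}=x^{(\nu+1)/2}$) yields
$$
\int_0^{+\infty}f(x)(B_{\nu,0+}^{-\alpha}\phi)(x)x^\nu\,dx=\xi^{-2\alpha}\int_0^{+\infty}J^1_{(\nu-1)/2,\alpha}(x\xi)\,x^{(\nu+1)/2}f(x)\,dx,
$$
which by \eqref{wright} is exactly $\xi^{-2\alpha}(R_\nu^\alpha f)(\xi)$, and restoring the outer $\xi^{-\nu}$ factor of the Hankel transform delivers the stated identity.

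The main obstacle is analytic rather than algebraic: both Proposition 3 and Proposition 5 are stated on ``proper functions,'' and one must verify that the oscillatory kernel $\phi(x)=x^{(1-\nu)/2}J_{(\nu-1)/2}(x\xi)$, together with $f$, lies in a class where the boundary terms implicit in the integration-by-parts identity genuinely vanish at $0$ and $+\infty$, and where Fubini applies to the resulting iterated integral. Imposing sufficient decay on $f$ at infinity (to control the $O(x^{-1/2})$ oscillations of the Bessel kernel) and appropriate behavior near $x=0$ (so that the $x^\nu$-weighted endpoint contributions are zero) secures both hypotheses simultaneously, after which the chain of identities above is a routine reordering.
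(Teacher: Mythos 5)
Your proposal is correct and follows essentially the same route as the paper: rewrite the Hankel integral as the right-hand side of \eqref{PoChastam} with $g(x)=x^{(1-\nu)/2}J_{(\nu-1)/2}(x\xi)$, transfer the fractional power onto the kernel, and evaluate it by \eqref{Bess10}. Your closing remarks on boundary terms and Fubini are in fact more careful than the paper, which simply works ``on proper functions'' (and, like you, must reconcile the stray $\xi^{-\nu}$ prefactor in the definition \eqref{han} with the stated identity).
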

\
\begin{proof}
We are using \eqref{PoChastam},
\vskip -10pt
$$
(H_\nu B_{\nu,-}^{-\alpha}f )(\xi)=\int\limits_0^{+\infty}{J}_\frac{\nu-1}{2}(x\xi)\,x^{\frac{\nu+1}{2}}(B_{\nu,-}^{-\alpha}f)(x)\,dx$$
\vspace*{-6pt}
$$=\int\limits_0^{+\infty}((B_{\nu,0+}^{-\alpha}y^{\frac{1-\nu}{2}}{J}_\frac{\nu-1}{2}(x\xi))f(x)\,x^\nu\,dx.
$$
\vskip -3pt \noindent
From \eqref{Bess10} we obtain
\vskip -12pt
$$
(H_\nu B_{\nu,-}^{-\alpha}f )(\xi)=\int\limits_0^{+\infty}((B_{\nu,0+}^{-\alpha}y^{\frac{1-\nu}{2}}{J}_\frac{\nu-1}{2}(x\xi))f(x)\,x^\nu\, dx$$
\vspace*{-6pt}
$$=\xi^{-2\alpha}\int\limits_0^{+\infty}x^{\frac{1-\nu}{2}}J_{\frac{\nu-1}{2},\alpha}^1(x\xi)f(x)\,x^\nu\,dx
$$
\vspace*{-6pt}
$$
=\xi^{-2\alpha}\int\limits_0^{+\infty}J_{\frac{\nu-1}{2},\alpha}^1(x\xi)f(x)\,x^{\frac{\nu+1}{2}}\, dx=\xi^{-2\alpha}(R_\nu^\alpha f)(\xi).
$$
\end{proof} 

\vspace*{-6pt}
\section{Finding a resolvent for  fractional powers \break of the Bessel differential operator}

We consider resolvents for integral operators at standard setting, cf. \cite{KoFo}.
For any linear operator $A$ on some Banach space $\Phi$ let us consider the equation
\begin{equation}\label{EQ00}
\left(A-\lambda I\right)g=f;\qquad \lambda \in C;\qquad f,g\in \Phi,
\end{equation}
and its solution as resolvent operator due to the well--known formula from  \cite{KoFo}
\vskip -10pt
 $$
 g=R_{\lambda } f=\left(A-\lambda I\right)^{-1} f=-\left(\lambda I-A\right)^{-1} f=-\frac{1}{\lambda } \left(I-\frac{1}{\lambda } A\right)^{-1} f
 $$
 \vspace*{-5pt}
\begin{equation}\label{Rez}
 =-\frac{1}{\lambda } \sum _{k=0}^{\infty }\left(\frac{1}{\lambda } A\right) ^{k} f=-\frac{1}{\lambda } f-\frac{1}{\lambda } \left(\sum _{k=1}^{\infty }\frac{A^{k}}{\lambda ^{k} }  f \right).
\end{equation}

Note that if integral representations are known for all powers $A^k$, then an integral representation for the resolvent is readily following from \eqref{EQ00}, of course if the series are convergent. In this way it is possible to get resolvent operators for the Riemann--Liouville fractional integrals, known as the Hille--Tamarkin formula \cite{KK} (in fact first proved by M.M. Dzhrbashyan in \cite{Dzh}),
and also for the Erd\'{e}lyi--Kober fractional integrals but we omit it here.

\begin{theorem} 
For a resolvent  operator of $(B_{\nu,-}^{-\alpha})$ the next formula is valid
\vskip -9pt
$$
 R_{\lambda } f
 =-\frac{1}{\lambda } f-\frac{1}{\lambda^2} \int\limits_x^{+\infty}f(y)\left(\frac{y^2-x^2}{2y}\right)^{2\alpha-1}dy\int\limits_{0}^{1}t^{\alpha-1}(1-t)^{\alpha-1}
  $$
  \vspace*{-4pt}
$$
\times \left(1-\left(1-\frac{x^2}{y^2}\right)t\right)^{-\alpha-\frac{\nu-1}{2}}  E_{(\alpha,\alpha),(\alpha,\alpha)}\left(\frac{1}{\lambda}\left(\frac{1}{4}\frac{t(1-t)(y^2-x^2)^2}
{y^2-(y^2-x^2)t}\right)^{\alpha}\right) dt,
$$
with the Wright or generalized (multi-index) Mittag--Leffler function
\vskip -7pt
\begin{equation}\label{GML2}
    E_{(1/\rho_i),(\mu_i)}(z)=\sum\limits_{k=0}^\infty \frac{z^k}{\Gamma(\mu_1+k/\rho_1)...\Gamma(\mu_m+k/\rho_m)},
\end{equation}
 cf. \cite{Kir1}, \cite{Kir2}--\cite{Kir4}, \cite{Rag}, \cite{Jor}, \cite{Vir}.
\end{theorem}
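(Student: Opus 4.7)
The plan is to turn the abstract Neumann series formula \eqref{Rez}, specialized to $A=B_{\nu,-}^{-\alpha}$, into an explicit integral by (i) identifying the iterated powers $A^k$ via the group property, (ii) inserting the integral representations from Definition~2, (iii) opening the hypergeometric kernel through Euler's integral representation so that the $k$-dependent prefactors align, and (iv) recognizing the resulting scalar series as the multi-index Mittag--Leffler function $E_{(\alpha,\alpha),(\alpha,\alpha)}$.

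First I would use the index law \eqref{SemiGroup3} to write $A^k = (B_{\nu,-}^{-\alpha})^k = B_{\nu,-}^{-k\alpha}$ and substitute \eqref{Bess3} with $\alpha$ replaced by $k\alpha$, giving an integral over $y\in(x,\infty)$ with kernel involving $(y^2-x^2)^{2k\alpha-1}$ and ${}_2F_1(k\alpha+\tfrac{\nu-1}{2},k\alpha;2k\alpha;1-x^2/y^2)$. Next I would rewrite the hypergeometric factor by the Euler integral
\[
{}_2F_1(a,b;c;z)=\frac{\Gamma(c)}{\Gamma(b)\Gamma(c-b)}\int_0^1 t^{b-1}(1-t)^{c-b-1}(1-zt)^{-a}\,dt,
\]
applied with $a=k\alpha+\tfrac{\nu-1}{2}$, $b=k\alpha$, $c=2k\alpha$. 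The resulting factor $\Gamma(2k\alpha)/\Gamma(k\alpha)^2$ cancels the $1/\Gamma(2k\alpha)$ from \eqref{Bess3}, leaving the clean weight $1/\Gamma(k\alpha)^2$ — precisely what is needed to match the squared Gamma in the definition \eqref{GML2} of $E_{(\alpha,\alpha),(\alpha,\alpha)}$.

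After interchanging the $k$-summation with the $y$- and $t$-integrals (justified on suitable $f$ and for $|\lambda|$ large enough by the exponential growth of $\Gamma(k\alpha)^2$ in $k$), the inner series becomes
\[
\sum_{k=1}^{\infty}\frac{1}{\lambda^{k}\,\Gamma(k\alpha)^2}\Bigl(\tfrac{y^{2}-x^{2}}{2y}\Bigr)^{2k\alpha-1} t^{k\alpha-1}(1-t)^{k\alpha-1}\bigl(1-(1-x^{2}/y^{2})t\bigr)^{-k\alpha-\frac{\nu-1}{2}}.
\]
Factoring out the $k=1$ prefactor and re-indexing $j=k-1\ge 0$, the remaining geometric-type base simplifies after multiplying numerator and denominator by $y^2$ to
\[
\frac{1}{\lambda}\Bigl(\tfrac{t(1-t)(y^{2}-x^{2})^{2}}{4\bigl[y^{2}-(y^{2}-x^{2})t\bigr]}\Bigr)^{\alpha},
\]
and the residual series $\sum_{j\ge 0} z^j/\Gamma((j+1)\alpha)^2$ is exactly $E_{(\alpha,\alpha),(\alpha,\alpha)}(z)$ by \eqref{GML2}. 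Combining this with the $-\tfrac{1}{\lambda}f$ term from \eqref{Rez} yields the announced formula.

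The main obstacle is the analytic bookkeeping at step (iii)--(iv): the Euler representation requires $\mathrm{Re}(c-b)>0$ and $\mathrm{Re}(b)>0$, which hold for $k\alpha>0$ and are uniform in $k$, but the Fubini-type interchange must be verified. I would control it by bounding the $t$-integrand uniformly on compact subsets of $(0,1)$ and using the bound $\Gamma(k\alpha)^{-2} \le C^k/(k!)^{2\alpha}$ (Stirling) to guarantee absolute convergence of the triple integral-sum whenever $|\lambda|$ exceeds an explicit threshold depending on $(x,f)$. Once this convergence is secured, all other manipulations are algebraic, and the identification with $E_{(\alpha,\alpha),(\alpha,\alpha)}$ is immediate from its definition.
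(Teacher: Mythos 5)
Your proposal follows essentially the same route as the paper's proof: expand the Neumann series using the index law $(B_{\nu,-}^{-\alpha})^k=B_{\nu,-}^{-k\alpha}$, open the ${}_2F_1$ kernel by Euler's integral so that $\Gamma(2k\alpha)$ cancels and leaves $1/\Gamma(k\alpha)^2$, re-index $k=p+1$, and recognize the inner series as $E_{(\alpha,\alpha),(\alpha,\alpha)}$. The only difference is that you supply an explicit convergence/Fubini justification, which the paper leaves implicit; your argument is correct.
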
 

\begin{proof}
Let us consider
\vskip -11pt
$$
(B_{\nu,-}^{-\alpha}\,f)(x)=\frac{1}{\Gamma(2\alpha)}\int\limits_x^{+\infty}\left(\frac{y^2-x^2}{2y}\right)^{2\alpha-1}\,_2F_1\left(\alpha+\frac{\nu-1}{2},\alpha;2\alpha;1-\frac{x^2}{y^2}\right)f(y)dy.
$$
\vskip -3pt \noindent
Using the group property or index law, we have
$$
(B_{\nu,-}^{-\alpha}\,f)^k=B_{\nu,-}^{-\alpha k}\,f.
$$
Then  from \eqref{Rez} we obtain
\vskip -9pt
$$
 R_{\lambda } f=-\frac{1}{\lambda } f-\frac{1}{\lambda } \left(\sum _{k=1}^{\infty }\frac{1}{\lambda ^{k} } B_{\nu,-}^{-\alpha k} f \right)=-\frac{1}{\lambda } f--\frac{1}{\lambda } \biggl(\sum _{k=1}^{\infty }\frac{1}{\lambda ^{k}\Gamma(2\alpha k) }
 $$
 \vspace*{-4pt}
$$
\times\int\limits_x^{+\infty}\left(\frac{y^2-x^2}{2y}\right)^{2\alpha k-1}\,_2F_1\left(\alpha k+\frac{\nu-1}{2},\alpha k;2\alpha k;1-\frac{x^2}{y^2}\right)f(y)dy \biggl)
$$
\vspace*{-6pt}
$$
=-\frac{1}{\lambda } f-\frac{1}{\lambda } \left(\int\limits_x^{+\infty}f(y)dy \sum _{k=1}^{\infty }\left[ \frac{1}{\lambda ^{k}\Gamma(2\alpha k) }\left(\frac{y^2-x^2}{2y}\right)^{2\alpha k-1} \right.\right.
$$
\vspace*{-4pt}
$$\left.\left.\times\,_2F_1\left(\alpha k+\frac{\nu-1}{2},\alpha k;2\alpha k;1-\frac{x^2}{y^2}\right)\right]\right).
$$
Using the integral representation for the hypergeometric function for $c - a - b > 0$:
\vskip -10pt
$$
F(a,b;c;z) = { \frac{\Gamma(c)}{\Gamma(b)\Gamma(c-b)} } \int\limits_{0}^{1} t^{b-1} (1-t)^{c-b-1} (1-tz)^{-a} \,dt,
$$
\vskip -4pt \noindent
we obtain
\vskip -13pt
$$
 R_{\lambda } f=-\frac{1}{\lambda } f-\frac{1}{\lambda } \int\limits_x^{+\infty}f(y)dy\int\limits_{0}^{1}  \sum _{k=1}^{\infty } \frac{1}{\lambda ^{k}\Gamma^2(\alpha k) }\left(\frac{y^2-x^2}{2y}\right)^{2\alpha k-1}  t^{\alpha k-1} (1-t)^{\alpha k-1}$$
 \vspace*{-5pt}
 $$\times \left(1-\left(1-\frac{x^2}{y^2}\right)t\right)^{-\alpha k-\frac{\nu-1}{2}}dt
 $$
 \vspace*{-6pt}
 $$
 =\{k=p+1\}=-\frac{1}{\lambda } f-\frac{1}{\lambda } \int\limits_x^{+\infty}f(y)dy\int\limits_{0}^{1}  \sum _{p=0}^{\infty } \frac{1}{\lambda ^{p+1}\Gamma^2(\alpha(p+1)) }\left(\frac{y^2-x^2}{2y}\right)^{2\alpha (p+1)-1}
 $$
 \vspace*{-5pt}
 $$\times  t^{\alpha (p+1)-1} (1-t)^{\alpha (p+1)-1}  \left(1-\left(1-\frac{x^2}{y^2}\right)t\right)^{-\alpha (p+1)-\frac{\nu-1}{2}}dt
 $$
 \vspace*{-6pt}
  $$
 =-\frac{1}{\lambda } f-\frac{1}{\lambda } \int\limits_x^{+\infty}f(y)\left(\frac{y^2-x^2}{2y}\right)^{2\alpha-1}dy
 \int\limits_{0}^{1}t^{\alpha-1}(1-t)^{\alpha-1}\left(1-\left(1-\frac{x^2}{y^2}\right)t\right)^{-\alpha-\frac{\nu-1}{2}}
 $$
 \vspace*{-6pt}
 $$\times\sum_{p=0}^{\infty } \frac{1}{\lambda ^{p+1}\Gamma^2(\alpha(p+1)) }\left(\frac{y^2-x^2}{2y}\right)^{2\alpha p}  t^{\alpha p} (1-t)^{\alpha p}  \left(1-\left(1-\frac{x^2}{y^2}\right)t\right)^{-\alpha p}dt
 $$
 \vspace*{-6pt}
   $$
 =- \, \frac{1}{\lambda } f-\frac{1}{\lambda^2} \int\limits_x^{+\infty}f(y)\left(\frac{y^2-x^2}{2y}\right)^{2\alpha-1}dy
 \int\limits_{0}^{1}t^{\alpha-1}(1-t)^{\alpha-1}\left(1-\left(1-\frac{x^2}{y^2}\right)t\right)^{-\alpha-\frac{\nu-1}{2}}
 $$
 \vspace*{-6pt}
  \begin{equation}\label{Last}
 \times\sum_{p=0}^{\infty } \frac{1}{\Gamma^2(\alpha+\alpha p) }\left[\frac{1}{\lambda}\left(\frac{1}{4}\frac{t(1-t)(y^2-x^2)^2}{y^2-(y^2-x^2)t}\right)^{\alpha}\right]^pdt.
 \end{equation}
 The function in \eqref{Last}   is a special case of the Wright generalized hypergeometric function defined above as \eqref{GML2}.
So it follows
\vskip -10pt
$$
\sum_{p=0}^{\infty } \frac{1}{\Gamma^2(\alpha+\alpha p) }\left[\frac{1}{\lambda}\left(\frac{1}{4}\frac{t(1-t)(y^2-x^2)^2}{y^2-(y^2-x^2)t}\right)^{\alpha}\right]^p$$
\vspace*{-7pt}
$$=E_{(\alpha,\alpha),(\alpha,\alpha)}\left(\frac{1}{\lambda}\left(\frac{1}{4}\frac{t(1-t)(y^2-x^2)^2}
{y^2-(y^2-x^2)t}\right)^{\alpha}\right),
$$
\vskip -3pt \noindent
and we finally derive
\vskip -12pt
$$
 R_{\lambda } f
 =-\frac{1}{\lambda } f-\frac{1}{\lambda^2} \int\limits_x^{+\infty}f(y)\left(\frac{y^2-x^2}{2y}\right)^{2\alpha-1}dy\int\limits_{0}^{1}t^{\alpha-1}(1-t)^{\alpha-1}
  $$
  \vspace*{-6pt}
$$
\times \left(1-\left(1-\frac{x^2}{y^2}\right)t\right)^{-\alpha-\frac{\nu-1}{2}}  E_{(\alpha,\alpha),(\alpha,\alpha)}\left(\frac{1}{\lambda}\left(\frac{1}{4}\frac{t(1-t)(y^2-x^2)^2}{y^2-(y^2-x^2)t}\right)^{\alpha}\right) dt.
$$
\end{proof} 


\section{The generalized Taylor formula with powers of Bessel operators}

Many applications of the Riemann--Lioville fractional integrals are based on the fact that they are remainder terms in Taylor formula. Such formulas exist also with powers of Bessel operators --- they are the so--called Taylor--Delsarte series, cf. \cite{Del1}--\cite{Lev3} and especially \cite{Fag1}. But in the Taylor--Delsarte series not a function itself is expanded but its generalized translation,
these series are in fact just operator versions of Bessel function series. But for application to numerical PDE solution we need the classical form of the Taylor formula $f(x+t)=f(x)+\dots$ , only with it we may calculate  PDE solutions layer by layer.
Such formulas are much harder to prove. With the above mentioned motivation as a tool for solving singular PDE numerically a first attempt to construct the generalized Taylor formula with Bessel operators was done in \cite{Kat1}--\cite{Kat2}.
But these results were rather vague as neither coefficients nor integral remainder term were found explicitly: for coefficients the recurrent system of equations was found and for the remainder term its evaluation as a multi--term composition of simple integral operators.
The solution to this problem of finding the generalized Taylor formula with Bessel operators in the explicit form
was found in \cite{Sita3}, also cf. \cite{Sita1}--\cite{Sita2}, \cite{Sita4}. Of course it is based on explicit forms for fractional powers of Bessel operators.

\begin{theorem}  
The next generalized Taylor formula is valid for proper functions:
\begin{eqnarray}
f(x)=\sum_{i=1}^{k}
\Biggl\{
 \frac{1}{\Gamma(2i-1)}
\Bigl(\frac{b^{2}-x^{2}}{2b}\Bigr)^{2i-2}
{_2}F_{1}\left(i+\frac{{\nu}-1}{2},i-1;2i-1;1-\frac{x^{2}}{b^{2}}\right) \nonumber\\
\times\,(B^{i-1}f)|_{b}- \frac{1}{\Gamma(2i)}
\Bigl(\frac{b^{2}-x^{2}}{2b}\Bigr)^{2i-1} {_2}F_{1}\left(i+\frac{{\nu}-1}{2},i;2i;1-\frac{x^{2}}{b^{2}}\right) \\
\nonumber \times\, (DB^{i-1}f)|_{b}
\Biggr\} +B_{{\nu},b-}^{-k}(B^{k}f).\phantom{11111111111111111}
\end{eqnarray}
\end{theorem}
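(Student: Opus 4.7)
My plan is to prove the formula by induction on $k$. The base case $k=1$ is obtained by redoing the computation of $(B_{\nu,b-}^{-1}B_{\nu}g)(x)$ from Section~3 without the assumptions $g(b)=g'(b)=0$: if one retains the evaluations at $y=b$ in the two successive integrations by parts (the integrals $\int_{x}^{b}y^{\nu-2}g(y)\,dy$ still cancel between the two pieces, as they did there), the computation collapses to
\[
(B_{\nu,b-}^{-1}B_{\nu}g)(x)\;=\;g(x)\;-\;g(b)\;-\;\frac{b-x^{1-\nu}b^{\nu}}{\nu-1}\,g'(b).
\]
Solving for $g(x)$ and rewriting the coefficient $\tfrac{b-x^{1-\nu}b^{\nu}}{\nu-1}$ as $-\tfrac{1}{\Gamma(2)}\bigl(\tfrac{b^{2}-x^{2}}{2b}\bigr)\,{}_{2}F_{1}\bigl(\tfrac{\nu+1}{2},1;2;1-x^{2}/b^{2}\bigr)$ by means of identity~\eqref{HypGeom}, and noting the trivial equality ${}_{2}F_{1}\bigl(\tfrac{\nu+1}{2},0;1;\cdot\bigr)=1$, one recognises the two $i=1$ summands of the statement plus the remainder $B_{\nu,b-}^{-1}(B_{\nu}f)$.

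For the inductive step from level $k$ to level $k+1$, I would apply the base case identity to the function $g:=B^{k}f$, which yields
\[
(B^{k}f)(x)\;=\;(B^{k}f)(b)\;+\;\alpha_{1}(x,b)\,(DB^{k}f)(b)\;+\;(B_{\nu,b-}^{-1}B^{k+1}f)(x),
\]
where $\alpha_{1}(x,b):=(b-x^{1-\nu}b^{\nu})/(\nu-1)$, and then apply $B_{\nu,b-}^{-k}$ to both sides. Invoking the group law $B_{\nu,b-}^{-k}B_{\nu,b-}^{-1}=B_{\nu,b-}^{-(k+1)}$ (established in the same spirit as Theorem~2, either by a Mellin-transform argument adapted to a finite endpoint or by direct iteration of the kernel) turns the last term into the new remainder $B_{\nu,b-}^{-(k+1)}(B^{k+1}f)$. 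Substituting the resulting expression for $B_{\nu,b-}^{-k}(B^{k}f)$ back into the inductive hypothesis at level $k$ converts that remainder into the new $(i=k+1)$ summand plus the new remainder, \emph{provided} one verifies the two coefficient identities
\[
B_{\nu,b-}^{-k}[1](x)\;=\;\tfrac{1}{\Gamma(2k+1)}\bigl(\tfrac{b^{2}-x^{2}}{2b}\bigr)^{\!2k}{}_{2}F_{1}\!\bigl(k+\tfrac{\nu+1}{2},k;\,2k+1;\,1-\tfrac{x^{2}}{b^{2}}\bigr),
\]
\[
B_{\nu,b-}^{-k}\bigl[\alpha_{1}(\cdot,b)\bigr](x)\;=\;-\tfrac{1}{\Gamma(2k+2)}\bigl(\tfrac{b^{2}-x^{2}}{2b}\bigr)^{\!2k+1}{}_{2}F_{1}\!\bigl(k+\tfrac{\nu+1}{2},k+1;\,2k+2;\,1-\tfrac{x^{2}}{b^{2}}\bigr).
\]

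Both identities can be checked by substituting Definition~1 for $B_{\nu,b-}^{-k}$, making the change of variables $y=x(1-t)^{-1/2}$ which turns the $y$-integral into one of the form $\int_{0}^{T}t^{c-1}(1-t)^{\sigma}{}_{2}F_{1}(a,b;c;t)\,dt$ with $T=1-x^{2}/b^{2}$, and then invoking the Prudnikov--Brychkov--Marichev formula 2.21.1.11 from \cite{IR3}---the very identity already used in the proof of Theorem~1---followed by an elementary algebraic rearrangement that reassembles the answer in the claimed hypergeometric form. These two integral evaluations are the main obstacle of the proof: everything else (base case, group law, index bookkeeping) is either explicit in the paper or routine. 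A parallel, more structural route that avoids the direct integral evaluation would be to prove by induction on $i$ that the coefficient functions $A_{i}(x),B_{i}(x)$ of $(B^{i-1}f)(b)$ and $(DB^{i-1}f)(b)$ in the statement satisfy $B_{\nu}A_{i}=A_{i-1}$ and $B_{\nu}B_{i}=B_{i-1}$ for $i\ge2$, with $A_{1},B_{1}\in\ker B_{\nu}$ and prescribed values of $A_{i},DA_{i},B_{i},DB_{i}$ at $x=b$, and then to conclude by the uniqueness of the two-point Cauchy problem for the second-order operator $B_{\nu}$; the non-trivial content there shifts from two integral identities to two lists of Gauss-contiguous identities.
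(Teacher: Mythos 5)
The paper offers no proof of this theorem at all --- it only remarks that ``the proofs of these two theorems are direct but rather tedious'' and points to \cite{Sita3}, \cite{Sita1}--\cite{Sita2}, \cite{Sita4} --- so your argument must stand on its own. Your induction scheme is sensible and your base case is correct: retaining the boundary terms at $y=b$ in the two integrations by parts of Section~3 does yield $(B_{\nu,b-}^{-1}B_{\nu}g)(x)=g(x)-g(b)+\frac{x^{1-\nu}b^{\nu}-b}{\nu-1}\,g'(b)$, and via \eqref{HypGeom} this is precisely the $k=1$ instance of the theorem. You have also correctly isolated the crux of the inductive step in the two coefficient identities for $B_{\nu,b-}^{-k}[1]$ and $B_{\nu,b-}^{-k}[\alpha_{1}(\cdot,b)]$; these identities are in fact true (the first one for $k=1$ can be checked directly from the Corollary of Section~3).

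Nevertheless there are two genuine gaps. First, the index law you invoke, $B_{\nu,b-}^{-k}B_{\nu,b-}^{-1}=B_{\nu,b-}^{-(k+1)}$, is for the \emph{finite-endpoint} operator, whereas Theorem~2 of the paper proves the group property only for $B_{\nu,-}^{-\alpha}$ and $B_{\nu,0+}^{-\alpha}$; the Mellin-transform proof does not transfer to a finite endpoint $b$, since the kernel restricted to $x<y<b$ is no longer a Mellin convolution, so what you actually need is the iterated-kernel identity for the Definition~1 kernels --- an unproved statement of the same difficulty as the coefficient identities themselves. Second, and more concretely, the tool you cite for those identities fails: after the substitution $y=x(1-t)^{-1/2}$ the first one becomes $\frac{1}{\Gamma(2k)}\bigl(\frac{x}{2}\bigr)^{2k}\int_{0}^{T}t^{2k-1}(1-t)^{-k-1}\,{}_2F_1\bigl(k+\tfrac{\nu-1}{2},k;2k;t\bigr)\,dt$ with $T=1-x^{2}/b^{2}<1$, i.e.\ an \emph{incomplete} integral whose weight $(1-t)^{-k-1}$ is singular at a point outside the range of integration, while formula 2.21.1.11 of \cite{IR3} evaluates the \emph{complete} integral $\int_{0}^{z}x^{\alpha-1}(z-x)^{c-1}{}_2F_1(a,b;c;1-x/z)\,dx$ and simply does not apply here. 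One would instead need the lower-parameter-raising formula $\int_{0}^{T}t^{c-1}(T-t)^{\mu-1}{}_2F_1(a,b;c;t)\,dt=B(c,\mu)T^{c+\mu-1}{}_2F_1(a,b;c+\mu;T)$ after a nontrivial preliminary hypergeometric transformation --- or, better, your alternative ``structural'' route (showing the coefficient functions of $(B^{i-1}f)|_{b}$ and $(DB^{i-1}f)|_{b}$ satisfy the recursion under $B_{\nu}$ via contiguous relations, then concluding by uniqueness of the Cauchy problem at $x=b$), which is the more workable path but is only sketched. As written, the proposal is an outline whose hardest steps are asserted rather than proved.
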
 

The proofs of these two theorems are direct but rather tedious.
Similar results are valid for the (Clifford-type Bessel) differential operator
\vskip -11pt
\begin{equation*}
C_{{\nu}}f=D^{2}f-D\left(\frac{{\nu}}{y}f\right)=D^{2}-\frac{{\nu}}{y}Df+\frac{{\nu}}{y^{2}}.
\end{equation*}

\begin{theorem} 
The next generalized Taylor formula is valid for proper functions:
\begin{eqnarray}
f(x)=\sum_{i=1}^{k}
\Biggl\{\frac{1}{\Gamma(2i-1)}
\Bigl(\frac{x^{2}-a^{2}}{2x}\Bigr)^{2i-2} (\frac{a}{x})\  {_2}F_{1}
\left(i+\frac{{\nu}-1}{2},i;2i-1;1-\frac{a^{2}}{x^{2}}\right)\nonumber\\
\times(C_{{\nu}}^{i-1}f)|_{a}
+\frac{1}{\Gamma(2i)}
\Bigl(\frac{x^{2}-a^{2}}{2x}\Bigr)^{2i-1} \phantom{1111111}\\
\nonumber \times  \ {_2}F_{1}\left(i+\frac{{\nu}-1}{2},i;2i; 1-\frac{a^{2}}{x^{2}}\right) \ a^{{\nu}} \ (Dx^{-{\nu}}C_{{\nu}}^{i-1}f)|_{a}
\Biggr\} +B_{{\nu},a+}^{-k}(C_{{\nu}}^{k}f).
\end{eqnarray}
\end{theorem}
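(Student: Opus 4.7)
The plan is to prove the formula by induction on $k$, following the same strategy as the proof of the preceding Taylor theorem for $B_\nu$ (which the authors describe as ``direct but rather tedious''). The key new algebraic input for the $C_\nu$ case is the similarity relation $C_\nu=y^\nu B_\nu y^{-\nu}$, equivalently $C_\nu f=y^\nu B_\nu(y^{-\nu}f)$, which one verifies by a two-line computation from $C_\nu f=f''-(\nu/y)f'+(\nu/y^2)f$. This relation clarifies the otherwise puzzling appearance of the factor $(a/x)$ in the first family of summands and the weighting $a^\nu(Dy^{-\nu}\,\cdot\,)|_{a}$ in the second family: they track, respectively, the value and the first derivative at $y=a$ of the auxiliary function $h(y):=y^{-\nu}f(y)$.

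For the base case $k=1$, I would substitute the explicit form
$$B_{\nu,a+}^{-1}g(x)=\frac{1}{\nu-1}\int_a^x y\bigl[1-(y/x)^{\nu-1}\bigr]g(y)\,dy$$
from the Corollary of Section~3 into $B_{\nu,a+}^{-1}(C_\nu f)(x)$, and integrate by parts twice on the $f''$ piece of $C_\nu f$ and once on the $(\nu/y)f'$ piece. The kernel $K(y):=y[1-(y/x)^{\nu-1}]$ vanishes at $y=x$, so the residues at the upper endpoint drop out; the residues at $y=a$ isolate exactly the two quantities $f(a)$ and $a^\nu(Dy^{-\nu}f)(a)$. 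The evaluations ${_2}F_1(\tfrac{\nu+1}{2},1;1;z)=(1-z)^{-\frac{\nu+1}{2}}$ and the identity \eqref{HypGeom} then translate these boundary residues into the stated hypergeometric coefficients at $i=1$.

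For the inductive step, I would write
$$B_{\nu,a+}^{-(k+1)}(C_\nu^{k+1}f)=B_{\nu,a+}^{-k}\!\left(B_{\nu,a+}^{-1}\bigl(C_\nu(C_\nu^{k}f)\bigr)\right),$$
apply the base case to the inner expression with $C_\nu^{k}f$ in place of $f$, and then apply $B_{\nu,a+}^{-k}$ to each of the three resulting summands. The remainder recollapses to $B_{\nu,a+}^{-(k+1)}(C_\nu^{k+1}f)$ via the $a+$-variant of the group law from Theorem~2, while $B_{\nu,a+}^{-k}$ acts on the two boundary data — which are power-type functions in the running variable — via Proposition~5, yielding the kernels $\bigl(\tfrac{x^2-a^2}{2x}\bigr)^{2i-2}$ and $\bigl(\tfrac{x^2-a^2}{2x}\bigr)^{2i-1}$ paired with the hypergeometric factors ${_2}F_1(i+\tfrac{\nu-1}{2},i;2i-1;\,\cdot\,)$ and ${_2}F_1(i+\tfrac{\nu-1}{2},i;2i;\,\cdot\,)$.

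The main obstacle is the combinatorial identification, at each stage of the induction, of the boundary residues with the precise hypergeometric coefficients appearing in the statement; in particular, one must verify that the $(a/x)$ factor in the first family and the $a^\nu Dy^{-\nu}$ weighting in the second family are exactly what the iterated integrations by parts produce. This step is elementary but laborious, and is the analogue of the bookkeeping the authors call ``direct but rather tedious'' for the preceding $B_\nu$ theorem. Regularity issues at $y=a$ are subsumed into the ``proper functions'' qualifier and may be made precise by requiring $f\in C^{2k}[a,\infty)$ with all the boundary data $(C_\nu^{i-1}f)(a)$ and $(Dy^{-\nu}C_\nu^{i-1}f)(a)$ for $i=1,\ldots,k$ finite.
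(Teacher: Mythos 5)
First, a point of context: the paper contains no proof of this theorem --- it only remarks that the proofs of Theorems~6 and~7 are ``direct but rather tedious'' and refers to \cite{Sita3} --- so your proposal has to stand on its own. Its architecture (induction on $k$, integration by parts in the base case, the similarity $C_{\nu}=y^{\nu}B_{\nu}y^{-\nu}$) is the right idea, but the base case as you describe it does not close up, and this is a genuine gap rather than omitted bookkeeping. The kernel $K(y)=y\bigl[1-(y/x)^{\nu-1}\bigr]$ of $B_{\nu,a+}^{-1}$ has linear span $\{y,\,y^{\nu}\}$ in $y$, which is the kernel of $C_{\nu}$, not of $B_{\nu}$. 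Hence the two integrations by parts leave no residual integral precisely when the integrand is $B_{\nu}g$ (this is the paper's Section~3 computation), whereas for $B_{\nu,a+}^{-1}(C_{\nu}f)$ the transposed operator landing on $K$ is $B_{\nu}$, and $B_{\nu}K\neq 0$, so a nonvanishing integral survives and the boundary residues alone cannot reconstitute $f$. Concretely, for $\nu=\tfrac12$, $f(y)=y^{2}$, $k=1$, the right-hand side of the formula with the remainder read literally as $B_{\nu,a+}^{-1}(C_{\nu}f)$ evaluates to $\tfrac{x^{2}}{2}+3ax+\tfrac{3a^{2}}{2}-4a^{3/2}x^{1/2}\neq x^{2}$. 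The cure is to push your own key identity all the way through: the paper's computation gives $B_{\nu,a+}^{-1}\bigl(B_{\nu}(y^{-\nu}f)\bigr)(x)=x^{-\nu}f(x)-a^{-\nu}f(a)-\tfrac{a}{\nu-1}\bigl[1-(a/x)^{\nu-1}\bigr](Dy^{-\nu}f)(a)$, and multiplying by $x^{\nu}$ reproduces exactly the two $i=1$ coefficients of the theorem (one checks $\tfrac{a}{x}\,{}_2F_1(\tfrac{\nu+1}{2},1;1;1-\tfrac{a^2}{x^2})=(x/a)^{\nu}$ and $\tfrac{x^2-a^2}{2x}\,{}_2F_1(\tfrac{\nu+1}{2},1;2;1-\tfrac{a^2}{x^2})\,a^{\nu}=\tfrac{ax^{\nu}-a^{\nu}x}{\nu-1}$). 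This forces the remainder to be the conjugated operator $x^{\nu}B_{\nu,a+}^{-k}\bigl(y^{-\nu}C_{\nu}^{k}f\bigr)$, i.e.\ the genuine fractional power of $C_{\nu}$; with that reading your strategy works, but as written your integration by parts would stall.

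Two further load-bearing steps in the inductive part are delegated to results that do not cover them. Proposition~5 computes $B_{\nu,0+}^{-\alpha}x^{m}$ and $B_{\nu,-}^{-\alpha}x^{m}$, i.e.\ the lower limits $0$ and $\infty$; it says nothing about $B_{\nu,a+}^{-k}$ applied to the boundary coefficients, which for $a>0$ are integrals from $a$ to $x$ of the functions $y$ and $y^{\nu}$ against the hypergeometric kernel, and showing that these produce the factors ${}_2F_1(i+\tfrac{\nu-1}{2},i;2i-1;\cdot)$ and ${}_2F_1(i+\tfrac{\nu-1}{2},i;2i;\cdot)$ is the actual content of the induction --- it cannot be read off from that proposition. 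Likewise, the index law is proved in the paper (Theorem~2) only for the endpoints $0+$ and $-$ via the Mellin transform; the semigroup property for $B_{\nu,a+}^{-\alpha}$ with general $a>0$, which you use to recollapse the remainder, needs its own justification. Neither of these objections is fatal to the plan, but together with the base-case issue they are exactly the places where the proof has to be done rather than announced.
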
  

All the hypergeometric functions in the above formulas may be expressed via Legendre functions as in Definitions 3 and 4.

\vspace*{-5pt}
\section{Generalizations and comments}

1. As we know the generalized Taylor series, then the definitions of the Bessel fractional integrals may be modified as Bessel--Gerasimov--Caputo fractional integrals \cite{KK} in obvious way by subtracting sections of series.

2. As in the paper \cite{Ida}, the more general fractional powers of operators $(\dfrac{d}{xdx})^\beta B_\nu^\alpha$ may be studied.

3. In \cite{GaPo} fractional powers of Bessel operators are applied in some cases for solving the fractional Klein--Gordon equation.
For such problems the most important case is $\nu=1$, for which the integral representations from Definitions 1--4 are simplified to kernels with classical Legendre functions $P_\nu(z)$, cf. \cite{IR3}.

4. In case of powers  $B_\nu^{-1/2}$ for all considered cases the kernels are reduced to generalized elliptic functions.
So generalized  Riesz transforms for the Bessel differential operator may be defined
explicitly, similarly as it is done in \cite{Vil} for the Laplace operator restricted to radial functions.

5.  Let $\nu_i>0$, $i=1,...,n$ and $B_{\nu_i}$ being the operator \eqref{Bess}.
Negative real powers of $\sum\limits_{i=1}^nB_{\nu_i}$ known as B--Riesz potentials for elliptic case and $\sum\limits_{i=1}^pB_{\nu_i} \!-\! \sum\limits_{i=p+1}^nB_{\nu_i}$  for hyperbolic and ultrahyperbolic cases were studied in \cite{Shi0}-\cite{Shi2}.
Using fractional powers of the Bessel differential operator defined and studied in this paper, we may introduce negative real powers for more general fractional differential operators of the form
\vskip -14pt
$$
\sum\limits_{i=1}^n B_{\nu_i}^{-\alpha_{i}}, \ \ \sum\limits_{i=1}^p B_{\nu_i}^{-\alpha_{i}}-\sum\limits_{i=p+1}^n B_{\nu_i}^{-\alpha_{i}}
$$
for all cases of Bessel fractional operators. Note that the B--Riesz potentials and so their further generalizations are closely connected with mean formulas and generalized spherical means for Bessel--type PDEs, cf. \cite{Shi3}--\cite{Shi5}.

6. Let $B_{\nu_i}^{-\alpha}$ be any of fractional Bessel operators from Definitions 1--4.
PDE equations with such operators (FrPDE) may be considered instead of varied Riemann-Liouville fractional operators for known types of fractional differential equations (FrDE), cf. \cite{Kil2}.

7. Fractional Bessel operators may be used in the transmutation theory \cite{Dim},
\cite{Kir1}, \cite{Car1}--\cite{Kra},  \cite{Sita4}.
In this way new classes of transmutations may be introduced, cf. \cite{Sita5}--\cite{Sita7}, and new singular PDEs considered by transmutation methods, cf. \cite{Sita8}--\cite{Sita9}.

8. The fractional powers of the Bessel operator are also important for the theory of commuting differential operators.
In fact the operator \eqref{Bess} differs only by power substitute from the angular momentum operator in quantum mechanics $D^2 - \frac{l(l+1)}{x^2}$. Finding differential operators commuting with it for $l=1$ use the Burchnall--Chaundy theory,
but it is interesting that for other integer $l$ commuting operators are derived using combination of this theory and
fractional powers of the Bessel operator $(B_{\nu})^{\frac{1}{2}}$ \cite{Mir}.

\vspace*{-0.4cm} 

\bigskip

\end{document}